\newtheorem{lemma}{Lemma}
\newtheorem{theorem}{Theorem}
\newtheorem{proposition}{Proposition}
\newtheorem{definition}{Definition}
\def\sym{\operatorname{sym}}
\newcommand{\Var}{\operatorname{\mathbb{V}}}
\newcommand{\E}{\operatorname{\mathbb{E}}}
\def\ds{\displaystyle}
\def\wh{\widehat}
\def\cal{\mathcal}
\def\R{\mathbb{R}}
\def\Pr{\operatorname{\mathbb{P}}}
\newcommand{\Img}[2]{\includegraphics[width=#1truecm]{#2}}
\begin{document}

\author[M. B\'ona and P. Flajolet]{Mikl\'os B\'ona and Philippe Flajolet}
\title[Random Phylogenetic Trees]{Isomorphism and Symmetries in Random Phylogenetic Trees}
\address{\rm M. B\'ona, Department of Mathematics, 
University of Florida,
358 Little Hall, 
PO Box 118105, 
Gainesville, FL 32611--8105 (USA)
}
\address{\rm P. Flajolet. {\sc Algorithms} Project,
INRIA Rocquencourt, 
F-78153 Le Chesnay (France)
}
\date{January 6, 2009}

\begin{abstract} 
The  probability  that  two randomly selected   phylogenetic trees of the same size are
isomorphic is  found to be asymptotic to a decreasing exponential modulated by a 
polynomial factor. The number of symmetrical nodes in a random phylogenetic tree of
large size obeys a limiting Gaussian distribution, in the sense
of both central and local limits. The probability that two
random phylogenetic trees have the same number of symmetries asymptotically obeys an
inverse square-root law.
Precise estimates for these  problems are obtained   by methods of analytic combinatorics,
involving   bivariate generating
functions, singularity analysis, and quasi-powers approximations.
\end{abstract}

\vspace*{-2.25truecm}
\maketitle

\vspace*{-0.5truecm}

\section{Introduction}
Every high school student of every civilized part of the world is cognizant of
the \emph{tree of species}, also known as the ``tree of life'',
in relation to Darwin's theory of evolution (Figure~\ref{darwin-fig}). 
We observe $n$ different species,
and form a group with the closest pair (under some suitable proximity criterion), 
then repeat the process
with the $n-2$ remaining species together with the newly formed group, and so on.
In this way a \emph{phylogenetic tree},
also known as ``cladogram'', is obtained: such a tree 
has the $n$ species at its external nodes, also called ``leaves''; it has $n-1$ internal binary nodes,
and it is naturally rooted at
the last node obtained by the process. 
Note that, by design, there is no specified order between the two children of a binary node.

Seen from combinatorics, the phylogenetic trees under consideration are thus trees in the
usual sense of graph theory (i.e., acyclic connected graphs~\cite[\S1.5]{Diestel00});
in addition, a binary node is distinguished as the root,
and each node has outdegree either~0
(leaf) or~2 (internal binary node).
Finally, the 
leaves are labeled by distinct integers, which
we may  canonically take to be an integer interval $[1,n]$.
In classical combinatorial terms, the set of phylogenetic trees thus
 corresponds to
the set $\cal B$ of \emph{rooted non-plane binary trees}, which are 
\emph{labeled at their leaves}.

\begin{figure}\small
\Img{4.5}{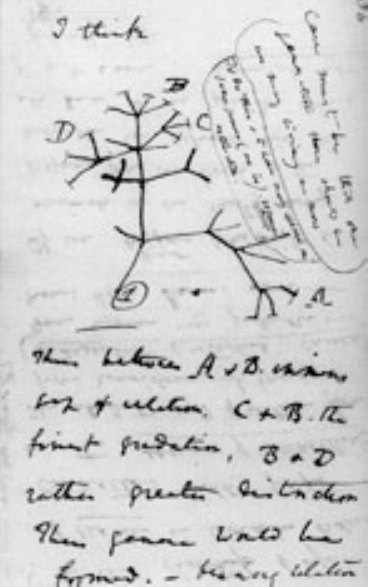}\qquad \Img{5}{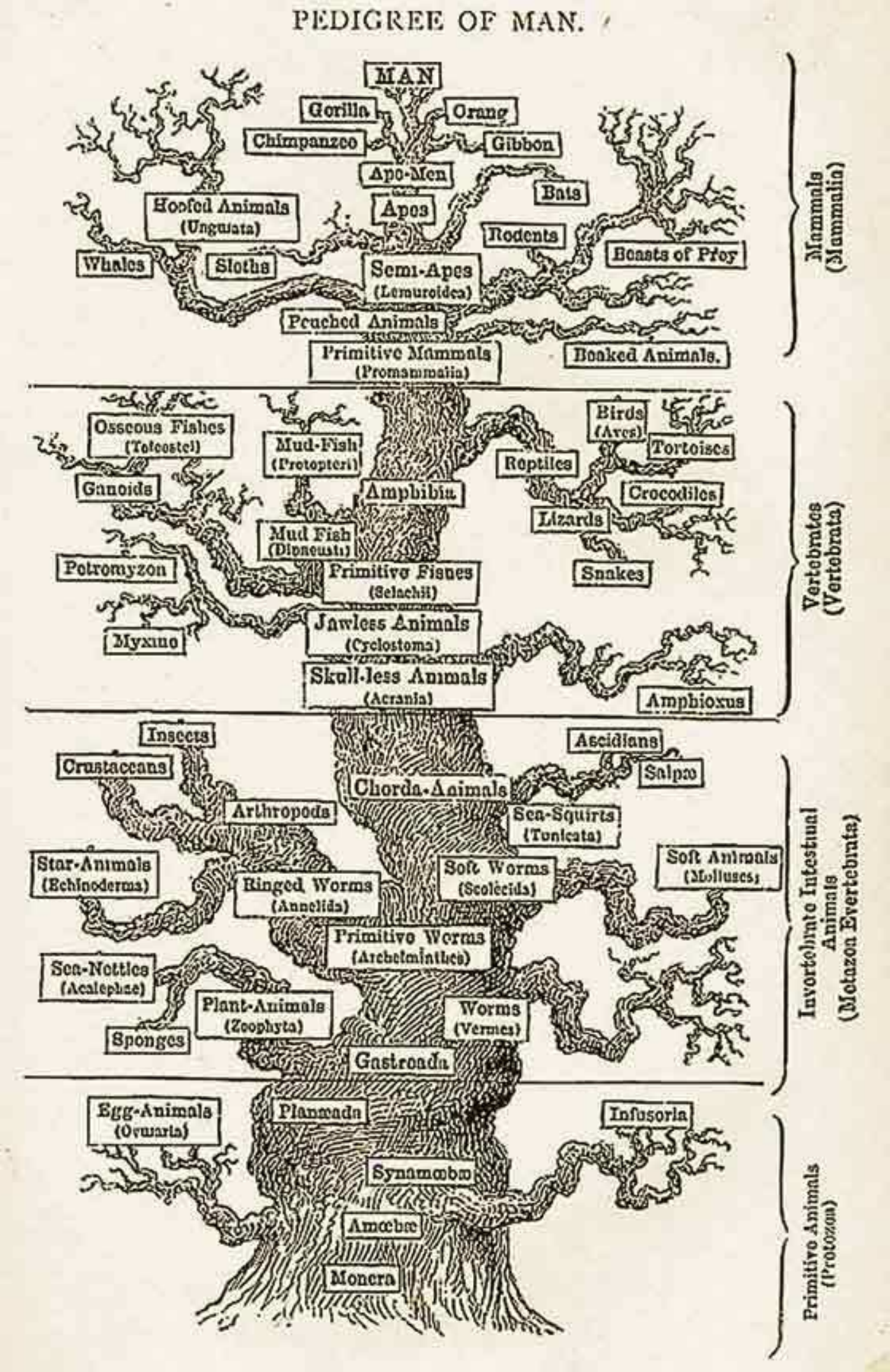} 
\caption{\label{darwin-fig}\small
{Left}: the representation of a pylogenetic tree in Darwin's own handwriting. 
{Right}: 
an illustration of the Tree of Life by Haeckel in 
\emph{The Evolution of Man}, published in 1879.
(Source: Entry ``Tree of life'', \emph{Wikipedia}.)
}
\end{figure}

We let $\cal B_n$ be the subset of~$\cal B$
corresponding to trees of size~$n$ (those with $n$ leaves)
and denote by~$b_n:=| \cal B_n|$ the corresponding cardinality.
Considering 
the listing of all unlabeled trees of sizes $1,2,3,4$ 
\begin{equation}\label{utrees14}
\begin{array}{c}\hbox{\setlength{\unitlength}{1truecm}
\begin{picture}(8,1.6)
\put(0,0){\Img{8}{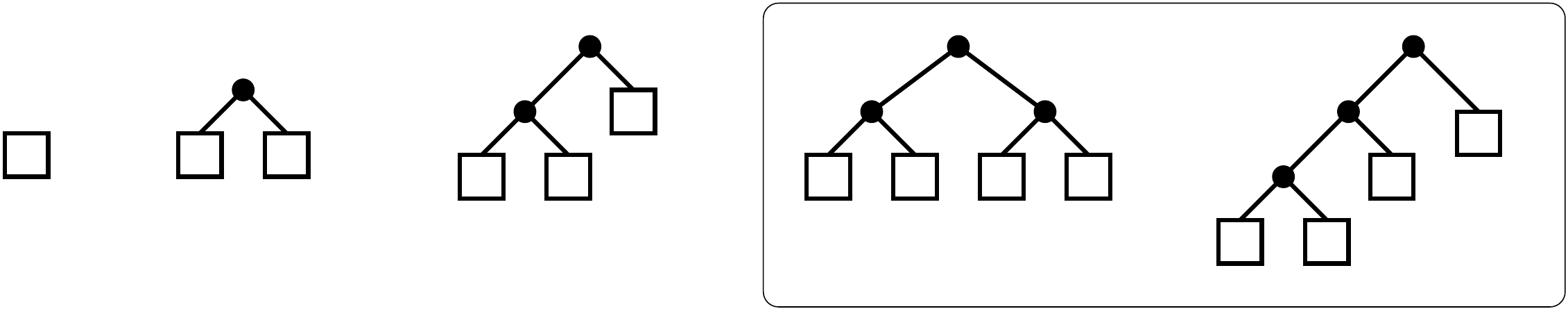}}
\put(4.7,0.20){$(L)$}
\put(7.3,0.20){$(R)$}
\end{picture}},\end{array}
\end{equation}
the reader is invited to verify that $b_1=1$, $b_2=1$, $b_3=3$,
and that $b_4=15$ is obtained by 
counting all possible labelings (3 and 12, respectively) of the two trees~$L,R$
shown on the right of~\eqref{utrees14}.

%

A general formula for the numbers $b_n$ is well known and
straightforward to
prove.  Indeed, if we introduce the \emph{exponential generating function}
\[
B(z):=\sum_{n\geq 1} b_n\frac{z^n}{n!},
\]
then
the fact that each element of $\cal B_n$ is built up from its two subtrees
implies that
\begin{equation} \label{labelfunc}
B(z)=z+ \frac{1}{2}{B(z)^2}.
\end{equation}
See the books by Stanley~\cite[pp.~13--15]{Stanley99} 
or Flajolet--Sedgewick~\cite[\S2.5]{FlSe08}
for details and related results.
So, $B(z)$ is the solution of the quadratic equation (\ref{labelfunc}) that is
a generating function. That is, 
\[
B(z)=1-\sqrt{1-2z}.\] 
This leads to the following  exact formula for the numbers $b_n$. 
  
\begin{proposition} \label{labelenum}
The number of phylogenetic trees on $n$ labeled
nodes is 
\[b_n=1\cdot 3\cdot \cdots (2n-3) \equiv (2n-3)!!.\]
\end{proposition}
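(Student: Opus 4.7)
The plan is to read off $b_n = n!\,[z^n]B(z)$ directly from the closed form $B(z) = 1 - \sqrt{1-2z}$ just derived, so that the entire task reduces to extracting a generalized binomial coefficient.

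First I would apply the binomial series, writing $(1-2z)^{1/2} = \sum_{n\ge 0}\binom{1/2}{n}(-2)^n z^n$ and hence $[z^n]B(z) = -(-2)^n\binom{1/2}{n}$ for $n \ge 1$. The core computation is then to simplify $\binom{1/2}{n}$: writing it as the falling factorial $(1/2)(1/2-1)\cdots(1/2-n+1)/n!$ and extracting a factor of $1/2$ from each of the $n$ numerator terms gives
\[
\binom{1/2}{n} \;=\; \frac{1}{2^n\,n!}\cdot 1\cdot (-1)\cdot (-3)\cdots (3-2n) \;=\; \frac{(-1)^{n-1}(2n-3)!!}{2^n\,n!},
\]
with the convention $(-1)!! = 1$ taking care of the base case $n=1$. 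Substituting back, the powers of $-1$ and $2$ cancel cleanly and yield $[z^n]B(z) = (2n-3)!!/n!$, so that $b_n = (2n-3)!!$ as claimed.

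The only thing requiring care is the sign and power-of-two bookkeeping in the simplification of $\binom{1/2}{n}$, but there is no conceptual obstacle; the small-$n$ values $b_1=1,\ b_2=1,\ b_3=3,\ b_4=15$ from the text provide a convenient sanity check. As an alternative, one may also give a purely combinatorial proof by induction: a tree on $n-1$ labeled leaves has $2n-4$ edges, so together with a phantom edge above the root it offers $2n-3$ insertion sites, and attaching leaf $n$ by subdividing any such site produces each element of $\cal B_n$ exactly once. This yields the recurrence $b_n=(2n-3)\,b_{n-1}$ with $b_1=1$, from which the formula is immediate.
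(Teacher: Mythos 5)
Your main argument is exactly the one the paper has in mind: the paper derives $B(z)=1-\sqrt{1-2z}$ and states that this "leads to" the formula, and you simply carry out the coefficient extraction via the binomial series for $(1-2z)^{1/2}$, which is correct (the sign and power-of-two bookkeeping checks out, yielding $[z^n]B(z)=(2n-3)!!/n!$). The alternative combinatorial induction you sketch (attaching leaf $n$ by subdividing one of the $2n-3$ edges, including the phantom root edge, of a tree in $\cal B_{n-1}$) is a valid bijective proof of the recurrence $b_n=(2n-3)b_{n-1}$ and a nice bonus, but the primary route matches the paper.
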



There is a natural way to associate an {\em unlabeled} rooted binary non-plane
tree to each element $t\in \cal B_n$, by simply removing all the labels of
$t$. We will say that two elements $t,t'\in \cal B_n$ are \emph{isomorphic} 
if removing their labels
will associate them to the same unlabeled tree. 
This leads to the following intriguing question. 

\begin{quote}\em
{\bf Question.} What is the probability $p_n$ that two  
phylogenetic trees, selected uniformly at random in~$\cal B_n$,
are isomorphic?
\end{quote}
\noindent
Note that, in our running example, the case
of $n=4$, we have
 $p_4=\left(\frac{1}{5}\right)^2+\left(\frac{4}{5}\right)^2=\frac{17}{25}$.
Indeed, if we selected two elements of $\cal B_4$ at random, 
there is a $(3/15)^2
=(1/5)^2$ chance that they will both belong to the isomorphism class
of $L$, and $(12/15)^2=(4/5)^2$ that they both belong to the isomorphism
class of $R$, where $L$ and $R$ are the two trees  of~\eqref{utrees14}.

In this paper,
 we will use a multivariate generating function argument (Section~\ref{alg-sec}) in conjunction
with an analysis of singularities in the complex plane (Section~\ref{ana-sec})
to answer the isomorphism  question in Theorem~\ref{thm1}.
In Section~\ref{auto-sec},
we will extend our analysis to distributional  estimates of the number of
symmetrical nodes in phylogenetic trees and in their unlabeled counterparts,
 known as Otter trees: see Theorems~\ref{thm2} and~\ref{thm3} for 
\emph{central and local limit laws}, respectively.
Such results in particular quantify the distribution of the log-size of the automorphism group of 
the random trees under consideration. 
In Section~\ref{same-sec}, we will work out an explicit estimate of the probability that
two random trees have the same number of symmetries.

\section{Isomorphism: a Generating Function Argument}   \label{alg-sec}

\subsection{Unlabeled Trees}\label{unlabeled}
Let $\cal U_n$ be the set  of all \emph{unlabeled} rooted binary non-plane
trees  with  $n$  leaves, and  let $u_n=|\cal U_n|$ be the corresponding count,
with \emph{ordinary generating function}
\[
U(z):=\sum_{n\ge1} u_n z^n.
\]
Such  trees are often
called {\em Otter  trees}, since Otter  was  the first to study  their
enumeration~\cite{Otter48}.  We can build a  generic element of  $\cal U_n$
by taking a tree  $t'\in \cal U_k$ and a tree  $t''\in \cal U_{n-k}$, and
 joining
their   roots to a  new   root.  As  the   order of $t$   and  $t'$ is
not significant, we  get each  tree  $t\in \cal U_n$   {\em twice}  this way,
except that, if the two subtrees of $t$ are identical,  we get $t$
only     once.         This       leads    to      the      functional
equation~\cite{FlSe08,HaPa73,Otter48,PoRe87}:
\begin{equation} \label{otterfunc}
U(z)=z+\frac{1}{2} \left( U(z)^2 + U(z^2) \right).
\end{equation}
The numbers $u_n$ 
are listed as sequence A001190 
(the \emph{``Wedderburn--Etherington numbers''}) in the On-line Encyclopedia
 of Integer
Sequences by Neil Sloane~\cite{Sloane08} and are the answers to various 
combinatorial enumeration
 problems.
The first few values of the sequence $\{u_n\}_{n\geq 1}$ are 
1, 1, 1, 2, 3, 6, 11, 23, 46, 98.

\subsection{A multivariate generating function}

Let $t_1\in \cal B_n$, and let $t_2\in \cal B_n$. By Proposition 
\ref{labelenum}, 
there are $(2n-3)!!^2$ possibilities for the ordered pair $(t_1,t_2)$,
where $t_1$ and $t_2$ do not have to be distinct.
Our goal is to
count such ordered pairs in which $t_1$ and $t_2$ are isomorphic. This
number, divided by $(2n-3)!!^2$ will then provide the probability $p_n$
that two randomly selected elements of $\cal B_n$ are isomorphic.

Let $t\in \cal U_n$.
Then the number of different labelings of the leaves of $t$ is 
\begin{equation}\label{wdef}
w(t)=\frac{n!}{2^{\sym(t)}}, 
\end{equation}
where $\sym(t)$ is the number of non-leaf nodes $v$ of $t$ such that
the two subtrees stemming from~$v$ are identical. For example, if $n=4$, and
$t$ is the tree $L$ of~\eqref{utrees14}, then we have
$w(t)=3$, and indeed, $t$ has $n!/2^3=24/8=3$ labelings. If $t$ is the 
tree $R$ of~\eqref{utrees14}, then we have $w(t)=1$, and $t$ has
$24/2=12$ labelings. 

Isomorphism classes within $\cal B_n$ correspond to elements of $\cal U_n$.
Set \begin{equation}
\label{normalize}
 W_n=\sum_{t\in \cal U_n} \frac{1}{2^{\sym(t)}}.\end{equation}
 As we have mentioned above, ${n!}/{2^{\sym(t)}}$ is the number of 
labeled trees in the isomorphism class corresponding to $t$.
 Summing this number over
all isomorphism classes, we obtain
the total number  of trees in $\cal B_n$. That is, 
 \[n!W_n=1\cdot 3\cdot \cdots (2n-3)!!.\]
For instance, $W_4=\frac{1}{8} + \frac{1}{2}=\frac{5}{8}$, and
$4!\cdot  \frac{5}{8}=15=5!!$.

Let 
\begin{equation}\label{deff}
F(z,u)=\sum_{t\in \cal U} u^{\sym(t)}z^{|t|}
\end{equation}
be the bivariate 
generating function of Otter trees, with $z$ marking the number of leaves,
and $u$ marking non-leaf nodes with two identical subtrees. 
In particular, $F(z,u)=z+uz^2+uz^3+(u^3+u)z^4+\hbox{higher degree terms}$.
The crucial observation about $F(z,u)$ is the following.

\begin{lemma} \label{lemmafunc}
The bivariate generating function $F(z,u)$ that enumerates Otter trees with 
respect to the number of symmetrical nodes satisfies the functional equation
\begin{equation} \label{funceq}
F(z,u)=z+\frac{1}{2}F(z,u)^2+\left(u-\frac{1}{2} \right)
F(z^2,u^2). \end{equation}
\end{lemma}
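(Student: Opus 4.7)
The plan is to derive the functional equation by the standard symbolic decomposition of Otter trees, tracking carefully how the symmetry statistic $\sym$ transforms under the tree-building operation. Every tree $t\in\cal U$ is either (i) a single leaf, contributing the monomial $z$, or (ii) obtained by joining two subtrees $t_1,t_2\in\cal U$ under a new root. I will write $F(z,u)^2$ as the generating function of \emph{ordered} pairs of trees weighted by $u^{\sym(t_1)+\sym(t_2)}z^{|t_1|+|t_2|}$, then convert to the unordered count required by the non-plane structure, and finally adjust the weight at the newly created root.

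For case (ii), split according to whether $t_1=t_2$ or $t_1\ne t_2$. If $t_1\ne t_2$, the new root is \emph{not} a symmetric node, so the total number of symmetric nodes of the resulting tree is exactly $\sym(t_1)+\sym(t_2)$; the number of unordered such pairs is $\tfrac12$ of the number of ordered pairs with $t_1\ne t_2$. If $t_1=t_2$, the new root \emph{is} a symmetric node, so the statistic becomes $2\sym(t_1)+1$ and the size becomes $2|t_1|$; each such unordered pair corresponds to a single $t_1$. The generating function of ordered pairs with $t_1=t_2$ is
\[
\sum_{t_1\in\cal U} u^{2\sym(t_1)}z^{2|t_1|} \;=\; F(z^2,u^2),
\]
so the generating function of ordered pairs with $t_1\ne t_2$ is $F(z,u)^2-F(z^2,u^2)$, and the contribution of unordered distinct pairs is $\tfrac12\bigl(F(z,u)^2-F(z^2,u^2)\bigr)$. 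The contribution of the diagonal, including the extra factor $u$ for the new symmetric root, is $u\,F(z^2,u^2)$.

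Summing over cases (i) and (ii) yields
\[
F(z,u) \;=\; z+\tfrac12\bigl(F(z,u)^2-F(z^2,u^2)\bigr)+u\,F(z^2,u^2),
\]
which rearranges to the claimed identity \eqref{funceq}. A quick sanity check against the known Otter equation \eqref{otterfunc}: setting $u=1$ collapses the equation to $F(z,1)=z+\tfrac12F(z,1)^2+\tfrac12F(z^2,1)$, recovering $U(z)$ as expected.

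The only conceptually delicate step is the bookkeeping for the diagonal $t_1=t_2$: one must simultaneously (a) avoid double-counting in the unordered join, (b) square both $z$ and $u$ because both size and symmetry statistic get doubled when the subtree is repeated, and (c) insert the extra factor of $u$ accounting for the fact that the new root has become a symmetric node. Getting these three contributions to combine into the single clean term $(u-\tfrac12)F(z^2,u^2)$ is the whole content of the lemma.
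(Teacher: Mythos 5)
Your proof is correct and takes essentially the same approach as the paper: decompose a non-leaf Otter tree into its two root subtrees, track how $\sym$ transforms under the join, and handle the ordered-to-unordered conversion with the diagonal $t_1=t_2$ contributing $F(z^2,u^2)$ and an extra factor of $u$ at the new symmetric root. The paper's version is more terse, stating the case split for $\sym(t)$ and describing the three terms of \eqref{funceq} in words, but the derivation you spell out is exactly the one intended.
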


\begin{proof}
If a tree consists of more than one node, then it is built 
up from its two subtrees. As the order of the two subtrees is not significant,
we will get each tree {\em twice} this way, except the trees whose two 
subtrees are identical. If $t_1$ and $t_2$ are the two subtrees of 
$t$ whose roots are the two children of the root of $t$, then 
\[\sym(t)=\left\{ 
\begin{array}{ll}
\sym(t_1)+ \sym(t_2), & \hbox{  if $t_1$ and $t_2$ are not identical}\\
\sym(t_1)+ \sym(t_2) +1, & \hbox{ if $t_1$ and $t_2$ are identical}.
\end{array}\right.
\]

The first term of the right-hand side of~\eqref{funceq} represents the 
tree on one node,
the second term represents all other trees as explained in the preceding
paragraph, and the third term is the correction term for trees in which the
two subtrees of the root are identical.
\end{proof}

Note that various specializations of $F(z,u)$ have a known 
combinatorial meaning. Indeed,
\begin{itemize}
\item[$(i)$] If $u=1$, then 
$F(z,1)=\sum_{t\in  U} z^{|t|}$ is simply the ordinary generating function~$U(z)$ of 
Otter trees with respect to their number of leaves. We have discussed
this generating function in Subsection \ref{unlabeled}, and mentioned
that its coefficients $u_n$ are the Wedderburn--Etherington numbers, 
which form sequence A001190 in \cite{Sloane08}.
\item[$(ii)$] If $u=2$, then $F(z,2)
=\sum_{t\in  U} z^{|t|}2^{\sym(t)}$ is the ordinary generating
function of the total number of automorphisms in all Otter trees. 
The coefficients constitute sequence A003609 in \cite{Sloane08}. Interested readers may
consult McKeon's studies~\cite{McKeon91,McKeon96} for details. 
The first few elements of the
sequence are 1, 2, 2, 10, 14, 42, 90, 354. 
\item[$(iii)$] If $u=1/2$, then  \[F\left(z,\frac12\right)
=\sum_{t\in \cal  U} z^{|t|}2^{-\sym(t)}=\sum_n W_nz^n=
\sum_n (2n-3)!!\frac{z^n}{n!},\]
is the exponential
 generating function $B(z)$ of labeled trees in disguise.
We have discussed this generating function in the Introduction.
The numbers $(2n-3)!!$ form sequence A001147 in \cite{Sloane08}.
\end{itemize}

It is more
surprising that the substitution $u=1/4$ will give us the answer we are 
seeking. 
Let $[z^n]g(z)$ denote the coefficient of $z^n$ in the power series $g(z)$.

\begin{lemma} \label{lnormalize}
For all positive integers $n\geq 2$, the probability~$p_n$
 that two phylogenetic trees
of size~$n$ are isomorphic satisfies
 \[p_n= \left (\frac{n!}{(2n-3)!!}\right)^2 \cdot [z^n]
  F\left(z,\frac14\right)\,.\]
\end{lemma}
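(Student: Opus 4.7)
The plan is to count ordered isomorphic pairs directly and recognize the resulting generating function. By Proposition~\ref{labelenum}, the denominator of $p_n$ is simply the total number of ordered pairs $(t_1,t_2)\in \cal B_n\times \cal B_n$, which equals $((2n-3)!!)^2$. So everything reduces to counting the numerator: the number of ordered pairs $(t_1,t_2)$ that are isomorphic.

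To do that, I would partition $\cal B_n$ according to isomorphism class and use the fact that these classes are exactly parametrized by $\cal U_n$. For each $t\in \cal U_n$, formula~\eqref{wdef} says that the isomorphism class of $t$ contributes $w(t)=n!/2^{\sym(t)}$ labeled trees to $\cal B_n$. Since an isomorphic ordered pair is chosen by picking the common unlabeled shape $t$ and then two labelings independently, the number of isomorphic ordered pairs is
\[
\sum_{t\in \cal U_n} w(t)^2 \;=\; \sum_{t\in \cal U_n} \left(\frac{n!}{2^{\sym(t)}}\right)^{\!2}
\;=\; (n!)^2 \sum_{t\in \cal U_n} \frac{1}{4^{\sym(t)}}.
\]

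The last step is to identify $\sum_{t\in \cal U_n} 4^{-\sym(t)}$ with a coefficient of $F$. This is immediate from the definition~\eqref{deff}: substituting $u=1/4$ gives
\[
F\!\left(z,\tfrac14\right) \;=\; \sum_{t\in \cal U} \left(\tfrac14\right)^{\sym(t)} z^{|t|},
\]
so extracting the coefficient of $z^n$ yields exactly $\sum_{t\in \cal U_n} 4^{-\sym(t)}$. Dividing the count of isomorphic pairs by $((2n-3)!!)^2$ then gives the stated formula.

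There is no real obstacle here: the content of the lemma is entirely bookkeeping. The only thing to notice, perhaps, is the reason the magic value $u=1/4$ appears: it is the square of the value $u=1/2$ used in item~$(iii)$ of the discussion, which reflects the fact that we are counting \emph{ordered pairs} of labelings of the same unlabeled shape rather than single labelings, so the weight $1/2^{\sym(t)}$ gets squared into $1/4^{\sym(t)}$.
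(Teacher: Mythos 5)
Your proof is correct and takes essentially the same approach as the paper's: both partition $\cal B_n$ by isomorphism class (indexed by $\cal U_n$), count ordered pairs within each class using $w(t)=n!/2^{\sym(t)}$, and recognize $\sum_{t\in\cal U_n} 4^{-\sym(t)}$ as $[z^n]F(z,1/4)$. The paper merely phrases the first step in probabilistic language (a push-forward measure $\kappa$ on $\cal U_n$), while you count ordered pairs directly; these are equivalent.
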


\begin{proof}
Consider the sample space 
whose elements are the elements of 
$\cal U_n$, and
in which the probability of $t\in \cal U_n$ is
\begin{equation}\label{kdef}
\kappa(t):=\frac{n!}{2^{\sym(t)}}\cdot \frac{1}{(2n-3)!!}
=\frac{w(t)}{(2n-3)!!}.
\end{equation}
(For probabilists, $\kappa$ is the image on~$\cal U_n$ of the uniform
 distribution of~$\cal B_n$.)
For instance,
if $n=4$, then this space has two elements, (the two 
trees $L,R$ of~\eqref{utrees14}), one  has probability
$1/5$, and the other has probability $4/5$.
If we select two elements of this space at random, the probability that
they coincide is
\[p_n=\sum_{t\in \cal U_n} \kappa(t)^2 = 
\frac{1}{(2n-3)!!^2} \sum_{t\in \cal U_n} w(t)^2=
\frac{n!^2}{(2n-3)!!^2} \sum_{t\in \cal U_n}
 \left (\frac{1}{4}\right) ^{\sym(t)}.\]

Our claim now follows since $\sum_{t\in \cal U_n} 
\left (\frac{1}{4}\right) ^{\sym(t)}$ is indeed
the coefficient of $z^n$ in $F(z,1/4)$, in accordance with the definition~\eqref{deff}.
\end{proof}

\section{Isomorphism: Singularity Analysis} \label{ana-sec}
By Lemma~\ref{lnormalize},
 our goal is now to find the coefficient of $z^n$ in the one-variable
generating function 
\[
f(z):=F(z,1/4). 
\]
Lemma \ref{lemmafunc} shows that the formal power series
 $F(z,u)$ is the solution of the quadratic
equation (\ref{funceq}) that satisfies $F(0,0)=0$. 
That is, 
\begin{equation} \label{funcy}
F(z,u)=1-\sqrt{1-2z-(2u-1)F(z^2,u^2)}.\end{equation}
Iterated applications of
 (\ref{funcy}), starting with~$u=1/4$, show that
\[\renewcommand{\arraycolsep}{2truept}\renewcommand{\arraystretch}{1.8}
\begin{array}{lll}
f(z) ~\equiv~ F(z,1/4) &=& \ds
1-\sqrt{1-2z+\frac{1}{2}
F\left(z^2,\frac{1}{16}\right)}
\\[1.5truemm]
&=&
\ds
1-\sqrt{\frac{3}{2}-2z-\frac{1}{2}
\sqrt{1-2z^2+\frac{7}{8}F\left(z^4,\frac{1}{256}\right)}}~=~\cdots\,.
\end{array}
\]
In the limit, there results that $f(z)$ admits a 
``continued square-root'' expansion
\[f(z)=1-\sqrt{\frac{3}{2}-2z-\frac{1}{2}
\sqrt{\frac{15}{8}-2z^2-\frac{7}{8}
\sqrt{\frac{255}{128}-2z^4-\frac{127}{128}
\sqrt{\cdots\vphantom{\frac{1}{1}}}}}},\]
out of which initial elements of the sequence $(p_n)_{n\ge1}$ are
 easily determined:
\[
1,~1,~1,~{\frac {17}{25}},~\frac{3}{7},~{\frac {5}{21}},~{\frac {13}{99}},~{\frac {
1385}{20449}},~{\frac {17861}{511225}},~{\frac {101965}{5909761}},~\cdots\,.
\]

In order to compute the growth rate of the coefficients of $f(z)$, we
will analyze the dominant singularity (or singularities) of this power
series.  The interested reader is invited to consult the book
\emph{Analytic Combinatorics} by Flajolet and Sedgewick \cite{FlSe08}
for more information on the notions and techniques that we are going
to use. Part of the difficulty of the problem is that the 
functional relation~\eqref{funcy} has the character of an 
inclusion--exclusion formula:
$F(z,u)$ does \emph{not} depend positively on
 $F(z^2,u^2)$, as soon as $u\le 1/2$,
which requires suitably crafted arguments, in contrast to the  
(simpler) asymptotic analysis of
$u_n=[z^n]F(z,1)$.

Briefly, we are interested in the {\em location}, {\em type}, and {\em number}
of the dominant singularities of $f(z)$, that is, singularities that have
smallest absolute value (modulus). 

\subsection{Location} 
First, it is essential for our analytic arguments to establish 
that $f(z)$ has a radius of convergence 
strictly less than~1. Our starting point  
parallels  Lemmas~1--2 of McKeon~\cite{McKeon96},
but we need a specific argument for the upper bound.

\begin{lemma}\label{rho-lem}
Let $\rho$
 be the largest real number such that $f(z)$ is analytic in the
interior of a disc centered at the origin that has radius $\rho$.
The following inequalities hold:
\[
0.4<\rho<0.625.
\]
\end{lemma}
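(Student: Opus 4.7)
The plan is to exploit the representation
$f(z)=F(z,1/4)=1-\sqrt{G(z)}$,
where $G(z):=1-2z+\tfrac{1}{2}F(z^2,1/16)$, obtained from the functional equation of Lemma~\ref{lemmafunc} by solving the quadratic in $F(z,1/4)$ and selecting the branch that vanishes at $0$. Both bounds will come from coefficient-wise comparisons with the labeled series $B(z)=1-\sqrt{1-2z}$.

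For the lower bound, note that $[z^n]F(z,u)=\sum_{t\in\cal U_n}u^{\sym(t)}$ is, as a polynomial in $u$, a sum of non-negative monomials; hence it is non-decreasing in $u\ge 0$. In particular
\[
[z^n]f \;\le\; [z^n]F(z,1/2) \;=\; (2n-3)!!/n!,
\]
the second equality being item~$(iii)$ of the remarks following Lemma~\ref{lemmafunc}. Since $B(z)=\sum_n (2n-3)!!\,z^n/n!$ has radius of convergence $1/2$, this already gives $\rho\ge 1/2>0.4$, comfortably exceeding the required bound.

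For the upper bound, the same monotonicity gives $F(z,1/16)\le B(z)$, so $F(\,\cdot\,,1/16)$ is analytic on $|z|<1/2$ and thus $G$ is analytic on $|z|<1/\sqrt{2}$, well beyond $5/8$. Since $G(1/2)=\tfrac{1}{2}F(1/4,1/16)>0$, it suffices to show that $G(5/8)<0$: the intermediate value theorem then produces a zero $z^\star\in(1/2,5/8)$ which, at the first sign change, has odd order, so $\sqrt{G}$ and hence $f$ acquires a branch-point singularity there, forcing $\rho\le z^\star<5/8$.

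The crux---and the main obstacle---is to prove $F(25/64,1/16)<1/2$. The naive coefficient bound $F(25/64,1/16)\le B(25/64)=1-\sqrt{7/32}\approx 0.533$ is just short, so one needs a sharper estimate. I would obtain it by iterating the functional equation~\eqref{funceq} once more at $u=1/16$: using $F(z^2,1/256)\ge z^2$ (the leading monomial of $F$), the relation
\[
F(z,1/16)=z+\tfrac{1}{2}F(z,1/16)^2-\tfrac{7}{16}F(z^2,1/256)
\]
entails, on the real interval where the radicand below is positive,
\[
F(z,1/16)\le 1-\sqrt{1-2z+\tfrac{7}{8}z^2}.
\]
Evaluating at $z=25/64$ gives an upper bound smaller than $0.41$, so that $G(5/8)\le -\tfrac{1}{4}+\tfrac{1}{2}(0.41)<0$, closing the argument.
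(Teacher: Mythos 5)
Your proof is correct, and both halves take a genuinely different route from the paper's.

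For the \textbf{lower bound}, the paper compares $F(z,1/4)$ coefficient-wise with $F(z,1)=U(z)$ and invokes Otter's numerical result that $U$ has radius of convergence $0.40269\cdots>0.4$. You instead compare with $F(z,1/2)=B(z)=1-\sqrt{1-2z}$, whose radius of convergence is \emph{exactly} $1/2$ and needs no outside input. This is cleaner, more self-contained, and gives the sharper $\rho\ge 1/2$ (which also happens to make the subsequent IVT argument for the upper bound work on the interval $(1/2,5/8)$). Both monotonicity appeals are justified by the same observation, namely that $[z^n]F(z,u)$ is a polynomial in~$u$ with non-negative coefficients.

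For the \textbf{upper bound}, the paper's argument is probabilistic in flavor: Cauchy--Schwarz gives $p_n>1/u_n$, a radius bound on $\sum(1/u_n)x^n$ follows from the radius of $U$, and Lemma~\ref{lnormalize} transfers this to a bound on the radius of $F(z,1/4)$ through the $4^n$ Stirling factor. You instead work directly with the representation $f=1-\sqrt{G}$, $G(z)=1-2z+\tfrac12 F(z^2,1/16)$, and show $G$ changes sign between $z=1/2$ and $z=5/8$. The plumbing is in order: $F(\cdot,1/16)$ has radius $\ge 1/2$ by the same monotonicity, so $G$ is analytic on $|z|<1/\sqrt{2}$, which comfortably contains $[1/2,5/8]$; the first sign change of a non-constant real-analytic $G$ is a zero of odd order, hence a genuine branch point of $\sqrt{G}$ and of $f$; so $\rho\le z^\star<5/8$. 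The one delicate step is bounding $F(25/64,1/16)$ below $1/2$; your iteration of~\eqref{funceq} once more at $u=1/16$, together with the trivial minorant $F(w,1/256)\ge w$, gives $F(z,1/16)\le 1-\sqrt{1-2z+\tfrac78 z^2}$ on the range where the radicand is positive (which includes $z=25/64$), and the numerics check out: $1-2(25/64)+\tfrac78(25/64)^2=11543/32768\approx 0.3523$, giving $F(25/64,1/16)\lesssim 0.4065$ and $G(5/8)\lesssim -0.047<0$. Overall your argument is more elementary and analytic, at the cost of one explicit numerical iteration; the paper's is slicker conceptually (it links $\rho$ to the coincidence probabilities directly) but leans on more external facts about $U(z)$.
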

\begin{proof}
$(i)$~\emph{Lower bound.}  
Note that $f(z)$ is convergent in some disc of radius \emph{at least} 0.4, 
since the coefficients of $f(z)=F(z,1/4)$ are at most as large as the 
coefficients of $F(z,1)$, the generating function $U(z)$ 
of Otter trees, and the
latter is known to be convergent in a disc of radius $0.40269\cdots$:
see Otter's original paper~\cite{Otter48} and
 Finch's book~\cite[\S5.6]{Finch03}
 for more details
on the asymptotics of $F(z,1)=U(z)$.

$(ii)$~\emph{Upper bound.}
For fixed $n$, let $a_1,a_2, \cdots, a_{u_n}$ be the numbers of
 our labeled trees whose
underlying unlabeled tree is the first, second, \ldots,
last Otter tree of size $n$.
Then  the relation
\begin{equation}
\label{compare}
p_n\equiv \frac{a_1^2+a_2^2+\cdots +a_{u_n}^2}{(a_1+a_2+\cdots +a_{u_n})^2} 
~>~\frac{1}{u_n},\end{equation}
results from the Cauchy-Schwarz inequality. (In words: the probability of coincidence
of two elements from a finite probability space is smallest when the
distribution is the uniform one.)

As we mentioned, it is proved in \cite{Otter48} that the generating function
$\sum_{n}u_nx^n$ converges in a disc of radius at least 0.4. 
Therefore, the 
series $\sum_{n}\frac{1}{u_n}x^n$ converges in a disc of radius at
most $1/0.4=2.5$, and by (\ref{compare}), this implies that 
$\sum_n p_nx^n$ converges in a disc of radius less than 2.5. Now
Lemma \ref{lnormalize} shows that $F(z,1/4)$ is convergent in a disc of
radius less than $2.5/4=0.625$, since the coefficients of $F(z,1/4)$ are,
up to polynomial factors, $4^n$ times larger than the 
coefficients of $\sum_n p_nx^n$.
 It
follows
that $\rho <0.625$.
\end{proof}

A well-known theorem of Pringsheim states that if a function $g(z)$ is
representable around the origin by a
series expansion that has non-negative coefficients and radius of convergence
$R$, then the real number $R$ is actually a singularity of $g(z)$.
Applying this theorem to $f(z)$, we see that the positive real number
 $\rho$ must be
a singularity of $f(z)$.

\subsection{Type} \label{type-subsec}
Recall that a function $g(z)$ analytic in a domain~$\Omega$ is said to
have a \emph{square-root singularity} at a boundary point~$\alpha$ if,
for some function $H$ analytic at~0,
the representation $g(z)=H(\sqrt{z-\alpha})$ holds
in the intersection of~$\Omega$ and a neighborhood of~$\alpha$.
(In particular, if $g(z)=\sqrt{\gamma(z)}$ with $\gamma$ analytic at~$\alpha$,
then $g(z)$ has a square-root singularity at~$\alpha$ 
whenever $\gamma(\alpha)=0$
and $\gamma'(\alpha)\not=0$.)

\begin{lemma}\label{type-lemma}
All dominant singularities (of
 modulus $\rho$) of $f(z)$ are isolated and are of the square-root type.
\end{lemma}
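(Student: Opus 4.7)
My plan is to exploit the first iteration of the functional equation~\eqref{funcy}, which yields
\[
f(z) = 1 - \sqrt{\gamma(z)}, \qquad \gamma(z) := 1 - 2z + \tfrac{1}{2} F\bigl(z^2, \tfrac{1}{16}\bigr),
\]
and to reduce the study of the dominant singularities of $f$ to the zero set of $\gamma$. The crucial point will be that $F(z^2, 1/16)$ is analytic well past the circle $|z| = \rho$, so $\gamma$ itself extends analytically across that circle, and the singular behavior of $f$ near $|z| = \rho$ is governed purely by the zeros of an honest analytic function.

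To establish this analyticity, I would first observe that for $0 \leq u \leq 1$ the coefficients $[w^n] F(w, u) = \sum_{t \in \mathcal{U}_n} u^{\sym(t)}$ are dominated by $u_n = [w^n] U(w)$, so $F(w, 1/16)$ has radius of convergence at least the Otter constant $\rho_U \approx 0.40269$. Consequently $F(z^2, 1/16)$, viewed as a function of $z$, is analytic for $|z| < \sqrt{\rho_U} \approx 0.6346$. Since Lemma~\ref{rho-lem} gives $\rho < 0.625 < \sqrt{\rho_U}$, the function $\gamma$ is analytic in an open neighborhood of the closed disc $\overline{D}(0, \rho)$. Because $\gamma(0) = 1 \neq 0$, $\gamma$ is not identically zero, so its zeros on the compact circle $|z| = \rho$ are isolated and finite in number.

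Next I would argue that the dominant singularities of $f$ coincide with a subset of these zeros. At any point $\alpha$ with $|\alpha| = \rho$ at which $\gamma(\alpha) \neq 0$, a local holomorphic branch of $\sqrt{\gamma}$ exists, so $f$ is analytic there; every dominant singularity $\alpha$ of $f$ therefore satisfies $\gamma(\alpha) = 0$ and is automatically isolated. Writing $\gamma(z) = (z-\alpha)^m h(z)$ with $h$ analytic at $\alpha$ and $h(\alpha) \neq 0$, the order $m$ cannot be even, for otherwise $\sqrt{\gamma(z)} = (z-\alpha)^{m/2} \sqrt{h(z)}$ would be analytic at $\alpha$, making $f$ analytic there and contradicting that $\alpha$ is a singularity. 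Thus $m$ is odd, and in a slit neighborhood of $\alpha$
\[
f(z) = 1 - \bigl(\sqrt{z-\alpha}\bigr)^m \sqrt{h(z)} = H\bigl(\sqrt{z-\alpha}\bigr),
\]
with $H(w) := 1 - w^m \sqrt{h(\alpha + w^2)}$ analytic at $w = 0$; this matches the definition of a square-root singularity recalled just before the lemma.

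The main obstacle will be the analytic continuation step. The chain of inequalities $\rho^2 < 0.625^2 < \rho_U$ is numerically tight and relies essentially on the quantitative upper bound $\rho < 0.625$ from Lemma~\ref{rho-lem}; without it, the extension of $\gamma$ past $|z| = \rho$ would not follow, and the square-root character of the dominant singularities would remain merely conjectural. Once this hurdle is cleared, the identification of singularities with zeros of an analytic function and the local analysis via $\sqrt{\cdot}$ are entirely standard. As a byproduct, a monotonicity argument on the positive axis ($\gamma > 0$ on $[0,\rho)$ and $\gamma(\rho) = 0$) can then be used to pin down $m = 1$ at $z = \rho$, should the subsequent singularity analysis require a \emph{simple} square root at the Pringsheim singularity.
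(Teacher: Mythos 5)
Your proof is correct and follows the same architecture as the paper's: pass to $\gamma(z)=1-2z+\tfrac12 F(z^2,1/16)$ via the first iteration of~\eqref{funcy}, show $\gamma$ continues analytically past $|z|=\rho$, and read off square-root singularities from its zeros. The one place you diverge is in the analytic-continuation step. You majorize $[w^n]F(w,1/16)$ by $u_n=[w^n]F(w,1)$, which yields $F(z^2,1/16)$ analytic in $|z|<\sqrt{\rho_U}\approx 0.6346$ and then forces you to invoke the \emph{upper} bound $\rho<0.625$ of Lemma~\ref{rho-lem} to squeeze through; you rightly flag this chain as numerically tight. The paper instead majorizes $[w^n]F(w,1/16)$ by $[w^n]F(w,1/4)$, whose radius is $\rho$ itself, so that $F(z^2,1/16)$ is analytic in $|z|<\sqrt{\rho}$. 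Then all that is needed is $\rho<1$ (so $\sqrt{\rho}>\rho$), a bound that falls out of $\rho<0.625$ with nothing tight about it; this self-referential comparison is cleaner and would survive even if the quantitative upper bound of Lemma~\ref{rho-lem} were much weaker. On the other hand, your treatment of the local nature of the singularity is more careful than the paper's: by allowing the zero of $\gamma$ at $\alpha$ to have arbitrary odd order $m$ and constructing $H(w)=1-w^m\sqrt{h(\alpha+w^2)}$ explicitly, you verify the representation $f=H(\sqrt{z-\alpha})$ required by the definition given before the lemma without silently assuming $\gamma'(\alpha)\neq 0$, which the paper's proof does not actually establish at this stage. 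Both arguments are valid; yours trades a slicker domination for a more complete local analysis.
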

\begin{proof} In order to see this, note that
$\rho<1$ (proved in Lemma~\ref{rho-lem})
implies that $\rho < \sqrt{\rho}$. Therefore, the power series
$F(z^2,1/4)$ (that has radius of convergence $\sqrt{\rho}$) 
is analytic in the interior of the disc of radius $\rho$,
and so is the power series $F(z^2,1/16)$ since its coefficients are smaller
than the corresponding coefficients of $F(z^2,1/4)$. Consequently,
Equation~(\ref{funcy}) implies that the dominant singularities of
\begin{equation}\label{toto}
f(z)=F\left(z,\frac14\right)=1-\sqrt{1-2z+
\frac{1}{2} F\left(z^2,\frac{1}{16}\right)}
\end{equation}
are of the square-root type: they are to be found amongst the roots of
the expression under the square-root sign in  (\ref{funcy}), that is, 
amongst the zeros of 
$1-2z+\frac{1}{2} F(z^2,1/16)$ that have modulus $\rho$. 
As $1-2z+\frac{1}{2} F(z^2,1/16)$ is analytic in 
the disc centered
at the origin with radius at least $\sqrt{\rho}>\rho$, it has isolated roots. 
Hence $f(z)$ has only a {\em finite} number of singularities on the circle
$|z|=\rho$, and each is of square-root type. 
\end{proof}

The argument of the proof (see~\eqref{toto}) also shows
 that $\rho$ is determined as
the smallest positive root of the equation
\begin{equation}\label{rhoeq}
1-2\rho+
\frac{1}{2} F\left(\rho^2,\frac{1}{16}\right)=0.
\end{equation}

\subsection{Number}
In order to complete our characterization of the dominant singular structure
of~$f(z)$, we need the following statement.

\begin{lemma} \label{number-lem}
The point $\rho$ is the {\em only} 
singularity of smallest modulus of $f(z)$. 
\end{lemma}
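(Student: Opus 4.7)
The plan is to exploit the previous lemma, which reduces the question to locating the zeros of modulus $\rho$ of
$$G(z) := 1 - 2z + \tfrac{1}{2}F(z^2, 1/16)$$
on the circle $|z| = \rho$, and to rule out every candidate other than $z = \rho$ by a classical triangle-inequality argument based on positivity of Taylor coefficients. Since $F(z, 1/16)$ has non-negative coefficients (all bounded by those of $U(z) = F(z,1)$), the function $F(z^2, 1/16)$ is analytic in $|z| < \sqrt{\rho}$, hence in particular on and inside the closed disk of radius $\rho$; so $G(z)$ is analytic there and we may freely estimate $G(\rho e^{i\theta})$.

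Suppose $z_0 = \rho e^{i\theta}$ with $\theta \in (0, 2\pi)$ satisfies $G(z_0) = 0$, i.e. $2\rho e^{i\theta} = 1 + \tfrac{1}{2} F(\rho^2 e^{2i\theta}, 1/16)$. Taking moduli and applying the triangle inequality twice yields
$$
2\rho \;=\; \bigl|1 + \tfrac{1}{2} F(\rho^2 e^{2i\theta}, 1/16)\bigr| \;\le\; 1 + \tfrac{1}{2}\bigl|F(\rho^2 e^{2i\theta}, 1/16)\bigr| \;\le\; 1 + \tfrac{1}{2} F(\rho^2, 1/16) \;=\; 2\rho,
$$
where the last equality is exactly the defining relation $(\ref{rhoeq})$ for $\rho$. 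Thus both inequalities are equalities. The second equality forces all nonzero terms in the series $\sum_n c_n (\rho^2 e^{2i\theta})^n$, with $c_n := [z^n] F(z, 1/16) \ge 0$, to share a common argument; since $c_1 = 1$ and $c_2 = 1/16$ are both strictly positive, we must have $e^{2i\theta} = e^{4i\theta}$, hence $e^{2i\theta} = 1$, so $\theta \in \{0, \pi\}$.

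It remains to exclude $\theta = \pi$, i.e. $z_0 = -\rho$. But then the defining equation becomes $-2\rho = 1 + \tfrac{1}{2} F(\rho^2, 1/16)$, whose right-hand side is positive (again by non-negativity of the coefficients of $F(\cdot, 1/16)$), a contradiction. Hence $\theta = 0$, and $\rho$ is the unique dominant singularity of $f(z)$.

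The main obstacle is simply being careful about which series is manipulated: one must not try to apply the triangle inequality to $f(z)$ itself (whose coefficients need not be non-negative once $u = 1/4 < 1/2$ is substituted), but rather to the auxiliary function $F(z^2, 1/16)$ appearing under the square root in the representation $(\ref{toto})$. That subsidiary function \emph{does} have non-negative coefficients, which is what drives the positivity argument through.
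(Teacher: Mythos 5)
Your proof is correct and takes a genuinely different --- and arguably leaner --- route than the paper's. The paper works directly on $f(z)$: it first invokes the $O$-transfer theorem of singularity analysis to get $[z^n]f(z)=O(\rho^{-n}n^{-3/2})$, hence absolute convergence of the series on the circle $|z|=\rho$; it then observes that any dominant singularity $z_0$ satisfies $f(z_0)=1=f(\rho)$ (the radicand vanishes there), and applies the strong triangle inequality to the convergent series of $f$ itself to force $z_0=\rho$. You instead push the argument one level down, to the radicand $G(z)=1-2z+\tfrac12 F(z^2,1/16)$ of~(\ref{toto}), which is analytic in a disk of radius strictly greater than $\rho$. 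Every dominant singularity of $f$ must be a zero of $G$ on $|z|=\rho$ (Lemma~\ref{type-lemma}), and you locate those zeros by applying the triangle inequality to the positive-coefficient series $F(\cdot,1/16)$, closing the chain with the defining relation~(\ref{rhoeq}); the residual candidate $z_0=-\rho$ is killed by a sign check. This entirely sidesteps the transfer-theorem step, since $G$ is a priori analytic on a full neighborhood of the closed disk $|z|\le\rho$, so no auxiliary convergence argument is needed. Combined with Pringsheim's theorem (which guarantees $\rho$ is indeed singular), this gives the claim.

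One small correction to your closing remark: the Taylor coefficients of $f(z)=F(z,1/4)$ \emph{are} non-negative --- indeed $[z^n]F(z,u)=\sum_{t\in\mathcal{U}_n}u^{\sym(t)}>0$ for every $u>0$ --- and the paper does apply the strong triangle inequality to $f$ itself. What loses positivity when $u<\tfrac12$ is the coefficient $u-\tfrac12$ in the functional equation~(\ref{funceq}), so that $F(z,u)$ does not depend \emph{positively} on $F(z^2,u^2)$; that is the complication flagged at the start of Section~\ref{ana-sec}, and it is a different issue from positivity of the coefficients of $f$.
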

\begin{proof}
The argument is somewhat indirect and it proceeds in two stages.

First we
show that, as a power series, $f(z)$ converges for each $z$ with $|z|=\rho$.
To this purpose, we need to recall briefly 
some principles of singularity analysis,
 as expounded in~\cite[Ch.~VI]{FlSe08}.
Let $g(z)$ be a function analytic in~$|z|<R$ with finitely many singularities
at the set~$\{\alpha_j\}$ on the circle $|z|=R$;
assume in addition that $g(z)$ has a square-root singularity at
 each~$\alpha_j$
in the sense of Subsection~\ref{type-subsec}. Then,
one has $
[z^n]g(z) = O\left(R^{-n} n^{3/2}\right)$.
(This corresponds to the $O$--transfer theorem
 of~\cite[Th.~VI.3, p.~390]{FlSe08},
with amendments for the case of multiples singularities 
to be found in~\cite[\S VI.5]{FlSe08}; see also~\eqref{sap} below.)
It follows from this general estimate and Lemma~\ref{type-lemma}
that 
\[
[z^n]f(z)=O(\rho^{-n}n^{3/2}).
\]
Therefore, the series expansion of $f(z)$ converges absolutely
as long as $|z|\le \rho$, and, in particular, it converges for all $z$ with
modulus $\rho$. 

Now, we are in a position to prove that $f(z)$ has no singularity other
than $\rho$ on the circle $|z|=\rho$. 
Let us assume the contrary; that is, there is a real number $z_0\neq
\rho$ such that $|z_0|=\rho$ and $z_0$ is a singularity of
 $f(z)\equiv F(z,1/4)$.
Then, it follows from~(\ref{funcy}) that $f(z_0)\equiv F(z_0,1/4)=1$, 
since the expression
under the square-root sign in (\ref{funcy}) is equal to 0, corresponding to 
a singularity of square-root type. 
On the other hand, one has a priori $|f(z_0)|\le f(\rho)$,
as a consequence of the triangle inequality and the fact, proved above, that
$f(z)$ converges on~$|z|=\rho$. Now it follows from the \emph{strong
 triangle inequality}
that the equality $f(z_0)=f(\rho)$ is only possible if all the terms
$f_nz_0^n$ that compose the (convergent) series expansion of~$f(z_0)$
are positive real. (Here $f_m=[z^m]f(z)$.)
 However,  since, in particular, $f_1=1$ is nonzero, this
 implies that 
$z_0=\rho$, and a contradiction has been reached. 
(This part of the argument is also closely related to the Daffodil 
Lemma of~\cite[p.~266]{FlSe08}.)
\end{proof}

\subsection{The asymptotics of $p_n$}
As a result of Lemmas~\ref{rho-lem}--\ref{number-lem},
the function $f(z)$ has only one dominant singularity, and that
singularity $\rho$ is of the square-root type. 
One then has, for a family of constants~$h_k$, 
the local singular expansion:
\begin{equation}\label{locsing}
f(z)= 1+\sum_{k=0}^\infty h_k (1-z/\rho)^{k+1/2},
\end{equation}
which is valid for~$z$ near~$\rho$.
The conditions of the singularity analysis process as 
summarized in~\cite[\S VI.4]{FlSe08}
are then satisfied. Consequently,  each singular element
 of~\eqref{locsing} relative to~$f(z)$
can be translated into a matching asymptotic term relative to~$[z^n]f(z)$,
according to the rule
\begin{equation}\label{sap}
\sigma(z)=(1-z/\rho)^\theta \quad\longrightarrow\quad
[z^n]\sigma(z) = \rho^{-n} \binom{n-\theta-1}{n}\sim 
\rho^{-n}\frac{n^{-\theta-1}}{\Gamma(-\theta)}.
\end{equation}
In particular, we have $[z^n]f(z)\sim C\cdot \rho^{-n} n^{-3/2}$,
 for some~$C$.
 
Hence Lemma \ref{lnormalize}, combined  with Lemmas~\ref{type-lemma}--\ref{number-lem} 
and the routine asymptotics of
$n!/(2n-3)!!$ by Stirling's formula, leads to the following theorem. 

\begin{theorem} \label{thm1}
The probability that two phylogenetic trees of size~$n$ are
 isomorphic
admits a complete asymptotic expansion
\begin{equation}\label{pnasy}
p_n \sim a \cdot b^{-n} \cdot n^{3/2}\left(1+\sum_k \frac{c_k}{n^k} \right) ,
\end{equation}
where $a$, $b=4\rho$, and the $c_k$ are computable constants,
 with values
$a=3.17508\cdots$, $b= 2.35967\cdots$, and $c_1$ approximately equal to $-0.626$.
\end{theorem}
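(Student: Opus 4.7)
The plan is to apply the singularity-analysis machinery of \cite{FlSe08} to $f(z):=F(z,1/4)$: translate a local Puiseux expansion at the unique dominant singularity $\rho$ into a complete asymptotic expansion of $[z^n]f(z)$, and then multiply by the Stirling expansion of the prefactor $(n!/(2n-3)!!)^2$ supplied by Lemma~\ref{lnormalize}. The first task is to pin down the local behavior at~$\rho$. The representation~\eqref{toto} writes $f=1-\sqrt{\gamma(z)}$, where $\gamma(z):=1-2z+\tfrac12 F(z^2,1/16)$ is analytic in $|z|<\sqrt{\rho}$, and hence analytic on a neighborhood of the closed disc of radius~$\rho$. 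By~\eqref{rhoeq}, $\gamma(\rho)=0$, and a brief transversality check ($\gamma'$ is strictly negative on the positive real axis near $\rho$, since $F(z^2,1/16)$ has nonnegative Taylor coefficients with slope smaller in magnitude than $2$) gives $\gamma'(\rho)\neq 0$. This yields a genuine square-root Puiseux expansion $f(z)=1+\sum_{k\geq 0}h_k(1-z/\rho)^{k+1/2}$ of the form~\eqref{locsing}, with $h_0\neq 0$ and higher $h_k$ extracted from the Taylor series of $\sqrt{\gamma}$ at~$\rho$.

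Because $\rho$ is the only singularity on its circle (Lemma~\ref{number-lem}) and is isolated and algebraic (Lemma~\ref{type-lemma}), the function $f$ continues analytically to a standard $\Delta$-domain, so the transfer rule~\eqref{sap} applies term by term and gives
\[
[z^n]f(z) \;\sim\; \rho^{-n}\,n^{-3/2}\sum_{k\geq 0}\frac{\tilde{h}_k}{n^k},\qquad \tilde{h}_0=-\frac{h_0}{2\sqrt{\pi}}.
\]
In parallel, Stirling's formula applied to $(2n-3)!!=(2n-2)!/(2^{n-1}(n-1)!)$ (or equivalently, singularity analysis applied to $B(z)=1-\sqrt{1-2z}$) yields the full asymptotic expansion $n!/(2n-3)!!=2\sqrt{\pi}\,n^{3/2}\,2^{-n}(1+\beta_1/n+\cdots)$; squaring gives $4\pi\,n^3\,4^{-n}(1+\gamma_1/n+\cdots)$. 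Invoking Lemma~\ref{lnormalize} and multiplying, $\rho^{-n}\cdot 4^{-n}=(4\rho)^{-n}=b^{-n}$ and $n^{-3/2}\cdot n^3=n^{3/2}$, while the two series in $1/n$ merge by Cauchy product into the series $\sum c_k/n^k$ of~\eqref{pnasy}; the leading constant $a$ is then an explicit multiple of~$h_0$.

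The obstacle I foresee is numerical rather than structural: high-precision values of $\rho$ (via~\eqref{rhoeq}) and of $h_0, h_1$ (via implicit differentiation of~\eqref{funcy}) require iterating the functional equation to evaluate $F(\rho^2,1/16)$, $F(\rho^4,1/256)$, and so on---precisely the nested-radical form already displayed in the text. With sufficient precision one recovers $a\approx 3.175$, $b\approx 2.360$, $c_1\approx -0.626$, but the asymptotic skeleton of the theorem follows immediately from the pieces already assembled.
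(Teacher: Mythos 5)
Your proposal follows the paper's own strategy exactly: reduce $p_n$ to a coefficient of $F(z,1/4)$ via Lemma~\ref{lnormalize}, use the square-root singular expansion~\eqref{locsing} at the unique dominant singularity~$\rho$ (Lemmas~\ref{type-lemma}--\ref{number-lem}), apply term-by-term singularity analysis/transfer~\eqref{sap}, and combine with Stirling's expansion of $n!/(2n-3)!!$, which collapses $\rho^{-n}4^{-n}$ to $(4\rho)^{-n}$ and $n^{-3/2}\cdot n^3$ to $n^{3/2}$. Your explicit transversality check $\gamma'(\rho)\neq 0$ is slightly more careful than the paper, which leaves that non-degeneracy implicit, but the argument is otherwise the same.
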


The function $F(z,u)$ can be determined numerically to great accuracy 
(by means of the recursion corresponding to the functional
 equation~\eqref{funcy}). So, the value
\[
\rho =  0.58991\,82714\,85535 \cdots, 
\]
is obtained as the smallest
 positive root of~\eqref{rhoeq}; 
the constant $a$ then similarly results from an evaluation of
 $F'\left(\rho^2,\frac{1}{16}\right)$;
the constant~$c_1$, which could in principle be computed in the same manner, was,
in our experiments, simply
estimated from the values of~$p_n$ for small~$n$. The formula~\eqref{pnasy},
truncated after its $c_1/n$ term,
then appears to approximate $p_n$ with a relative accuracy better than 
$10^{-2}$ for $n\ge5$, $10^{-4}$ for $n\ge38$, and $10^{-5}$ for $n\ge47$.

\section{Symmetrical Nodes and Automorphisms} \label{auto-sec}

\def\phi{\varphi}

In the course of our investigations on analytic properties of 
the bivariate generating function~$F(z,u)$, we came up with a few additional
estimates, which improve on those of McKeon~\cite{McKeon96}.
In essence, what is at stake is a perturbative analysis of~$F(z,u)$
and its associated singular expansions, for various values of~$u$,
in a way that refines  the developments of the previous section.
We offer here a succinct account: details can be easily supplemented
by referring to Chapter~IX of the book
 \emph{Analytic Combinatorics}~\cite{FlSe08}.

\begin{theorem} \label{thm2}
$(i)$~Let~$X_n$ be the random variable representing the number of symmetrical nodes
in a random  Otter tree of~$\cal U_n$. Then, $X_n$ satisfies a limit  law 
of Gaussian type,
\[
\forall x\in\R~:
\qquad
\lim_{n\to\infty}\Pr\left(X_n\le \mu n + \sigma x \sqrt{n}\right)
=\frac{1}{\sqrt{2\pi}}
\int_{-\infty}^x e^{-w^2/2}\, dw, 
\]
for some positive constants~$\mu$ and $\sigma$. Numerically, $\mu=0.35869\cdots$\,.

$(ii)$~Let~$Y_n$ be the random variable representing the number of symmetrical nodes
in a random  phylogenetic tree of~$\cal B_n$. Then, $Y_n$ satisfies a limit  law 
of Gaussian type,
\[
\forall x\in\R~:
\qquad\lim_{n\to\infty}\Pr\left(Y_n\le \wh\mu n + \wh\sigma x \sqrt{n}\right)
=\frac{1}{\sqrt{2\pi}}
\int_{-\infty}^x e^{-w^2/2}\, dw,
\]
for some positive constants~$\wh\mu$ and $\wh\sigma$. Numerically, $\wh\mu=0.27104\cdots$\,.
\end{theorem}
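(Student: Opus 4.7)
The plan is to establish both parts via Hwang's quasi-power (Gaussian limit) theorem, applied to a perturbative singularity analysis of the bivariate generating function $F(z,u)$ from Lemma~\ref{lemmafunc}, with the second variable treated as an analytic parameter rather than fixed at $u = 1/4$.

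Setup: the probability generating functions of interest are
\[
\E[v^{X_n}] = \frac{[z^n]F(z,v)}{u_n},
\qquad
\E[v^{Y_n}] = \frac{n!}{(2n-3)!!}\,[z^n]F(z,v/2),
\]
the second identity following since an unlabeled tree $t\in \cal U_n$ carries weight $n!/2^{\sym(t)}$ in $\cal B_n$, so that marking symmetries in the phylogenetic model corresponds to substituting $u \mapsto v/2$ in $F(z,u)$. Both parts thus reduce to an asymptotic estimate for $[z^n]F(z,u)$, uniform in $u$, in a complex neighborhood of $u_0 = 1$ (part~(i)) and $u_0 = 1/2$ (part~(ii)). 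At $u_0 = 1/2$ the functional equation degenerates attractively: $(2u-1)F(z^2,u^2)$ vanishes and $F(z,1/2) = 1 - \sqrt{1-2z}$, with $\rho(1/2) = 1/2$; at $u_0 = 1$ one recovers the Otter radius, known to lie strictly inside the unit disc.

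Core analysis: I would promote the reasoning of Section~\ref{ana-sec} at $u = 1/4$ to this parameterized family. Writing~\eqref{funcy} as
\[
F(z,u) = 1 - \sqrt{1-2z-(2u-1)F(z^2,u^2)},
\]
the condition $\rho(u_0) < 1$ again forces $F(z^2,u^2)$ to be analytic in the larger disc $|z| < \sqrt{\rho(u_0)}$, so the analytic implicit function theorem applied to
\[
1 - 2\rho(u) - (2u-1)F(\rho(u)^2, u^2) = 0
\]
produces an analytic branch $u \mapsto \rho(u)$ near $u_0$ at which $F(\cdot,u)$ retains a single isolated square-root singularity. Uniqueness on the circle $|z| = \rho(u)$, first obtained on the real axis by the Pringsheim/Daffodil argument of Lemma~\ref{number-lem}, extends by continuity to a complex disc around $u_0$. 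Singularity analysis on a common $\Delta$-domain then delivers the uniform expansion
\[
[z^n]F(z,u) = C(u)\,\rho(u)^{-n}\,n^{-3/2}\,\bigl(1 + O(n^{-1})\bigr),
\]
with $C$ and $\rho$ analytic in $u$. Dividing by the normalizers $u_n$ and $(2n-3)!!/n!$ produces quasi-power approximations $A(v) B(v)^n (1 + O(n^{-1}))$ of the two PGFs, with $B(v) = \rho(1)/\rho(v)$ for $X_n$ and $B(v) = \rho(1/2)/\rho(v/2)$ for $Y_n$. Hwang's theorem then yields the Gaussian limit with means $\mu = -\rho'(1)/\rho(1)$ and $\wh\mu = -\rho'(1/2)/(2\rho(1/2))$, and analogous closed forms for the variances in terms of the first two derivatives of $\log\rho$.

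The main obstacle is the variability condition $\sigma, \wh\sigma > 0$ required by Hwang's theorem. Because the functional equation couples $(z,u)$ with $(z^2,u^2)$, implicit differentiation of $\rho(u)$ at $u_0$ produces expressions that mix values of $F$ and its partial derivatives at $u_0$ with those at $u_0^2$ (that is, at $u = 1$ for Otter trees and at $u = 1/4$ for phylogenetic trees), and there is no manifest a priori sign. I would verify positivity by numerical evaluation, using the recursion from~\eqref{funcy} to compute $F$ and its partials to high precision; this simultaneously reproduces the quoted constants $\mu = 0.35869\cdots$ and $\wh\mu = 0.27104\cdots$ (the latter equal to $F(1/4, 1/4)$, by a short calculation from the implicit equation). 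A secondary concern is that at $u_0 = 1/2$ the functional equation degenerates, but with $G(z,u) := 1 - 2z - (2u-1) F(z^2, u^2)$ one checks $G_z(1/2,1/2) = -2 \ne 0$, so the implicit function theorem applies cleanly across this value and the branch $\rho(u)$ extends analytically.
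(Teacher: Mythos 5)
Your proposal follows essentially the same route as the paper: treat $u$ as an analytic parameter, locate the moving square-root singularity $\rho(u)$ via the implicit equation $1-2\rho(u)-(2u-1)F(\rho(u)^2,u^2)=0$, obtain a quasi-powers approximation for $\phi_n(u)/\phi_n(u_0)$ near $u_0=1$ (Otter) and near $u_0=1/2$ (phylogenetic, using $\phi_n(v/2)$ as the PGF up to normalization), and invoke Hwang's Quasi-powers Theorem. Your version is, if anything, slightly more careful: you state the implicit equation with the correct sign, you note the need to confirm the variability condition $\sigma,\widehat\sigma>0$ (which the paper passes over), and you identify the clean shortcut $\widehat\mu=F(1/4,1/4)$ from differentiating the implicit relation at $u=1/2$.
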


\begin{figure}
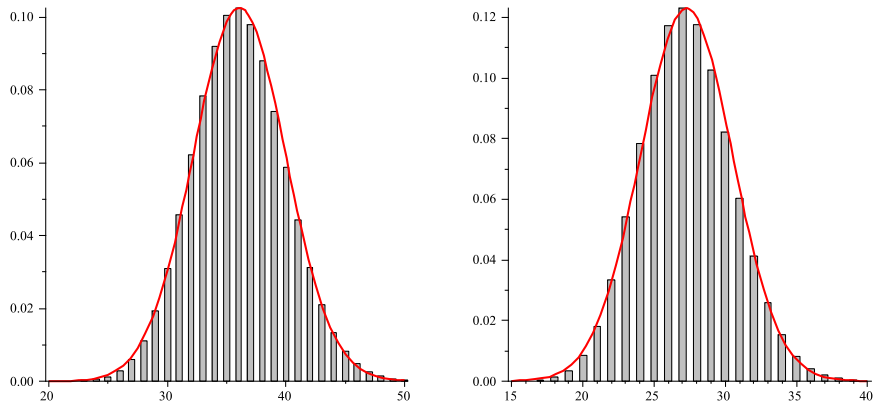
\small
\begin{center}
\Img{5.5}{histo100}\qquad \Img{5.5}{phylo100}
\end{center}
\caption{\label{gauss-fig}\small
Histograms of the distribution of the number of symmetrical nodes in 
trees of size~$100$, compared to a matching Gaussian.
Left: Otter trees of 
$\cal U_{100}$. Right: phylogenetic trees of $\cal B_{100}$.}
\end{figure}

\begin{proof}[Proof (Sketch)]
$(i)$~\emph{The case of Otter trees $(X_n,\cal U_n)$}.
In accordance, with general principles~\cite[Ch.~IX]{FlSe08},
we need to estimate the generating polynomial
\begin{equation}\label{defphin}
\phi_n(u):=[z^n]F(z,u),
\end{equation}
when $u$ is close to~1, with~$F(z,u)$ as specified by~\eqref{deff} 
and~\eqref{funceq}.
For~$u$ in a small enough complex neighborhood~$\Omega$ of~$1$, the radius of
convergence of~$F(z^2,u^2)$ is larger than some $\rho_2>\rho_1$,
where $\rho_1\approx 0.40269$ is the radius of convergence associated
with Otter trees.
Then, by an argument similar to the ones used earlier,
there exists a solution~$\rho(u)$ to the analytic equation
\begin{equation}\label{ananeq}
1-2\rho(u)+(u-1)F(\rho(u)^2,u^2)=0
\end{equation}
(compare with~\eqref{rhoeq}), such that $\rho(1)=\rho_1$ is the
dominant singularity of the generating function~$F(z,1)$ of Otter trees.
By the analytic version of the implicit function theorem (equivalently,
by the Weierstrass Preparation Theorem), this function~$\rho(u)$
depends analytically on~$u$, for~$u$ near~$1$.

In addition, by~\eqref{funcy},
the function $F(z,u)$ has a singularity of the square-root type at~$\rho(u)$.
Also, for~$u\in\Omega$ and~$\Omega$ taken small enough,
the triangle inequality combined with the previously established properties of
$F(z,1)$ may be used to verify that there are no other singularities of $z\mapsto F(z,u)$ on $|z|=|\rho(u)|$.
There results, from singularity analysis
\emph{and} the uniformity of the  process~\cite[p.~668]{FlSe08},
the asymptotic estimate
\begin{equation}\label{phinasy}
\phi_n(u)={c(u)}\rho(u)^{-n}n^{-3/2}\left(1+o(1)\right), \qquad n\to+\infty,
\end{equation}
\emph{uniformly} with respect to~$u\in\Omega$, for some~$c(u)$ that is analytic at~$u=1$.
Then, the probability generating  
function of~$X_n$, which equals $\phi_n(u)/\phi_n(1)$ satisfies what is known as a
\emph{ ``quasi-powers approximation}.
That is, it resembles (analytically) the probability generating function
 of a sum of independent
random variables,
\begin{equation}\label{pgfUn}
\frac{\phi_n(u)}{\phi_n(1)}=\frac{c(u)}{c(1)} \left(\frac{\rho(1)}{\rho(u)}\right)^{n}
\left[1+\varepsilon_n(u)\right],
\end{equation}
where $\sup_{u\in\Omega}|\varepsilon_n(u)|$ tends to~0 as $n\to\infty$.
The Quasi-powers Theorem (see~\cite[\S IX.5]{FlSe08} and~\cite{Hwang98b})
precisely applies to such approximations by quasi-powers and implies that
the distribution of~$X_n$ is asymptotically normal. 

\smallskip
$(ii)$~\emph{The case of phylogenetic trees $(Y_n,\cal B_n)$}. 
The starting point is a simple combinatorial property of~$\phi_n(u)$, as defined in~\eqref{defphin}:
\begin{equation}\label{defphi2n}
\phi_n(u/2)=\frac{1}{n!} \sum_{t\in \cal U_n} \frac{n!}{2^{\sym(t)}} u^{\sym(t)}
=\frac{1}{n!} \sum_{t\in\cal B_n} u^{\sym(t)}.
\end{equation}
(The first form results from the definition~\eqref{deff} of $F(z,u)$;
the second form relies on the expression~\eqref{wdef} of the number of different labellings 
of an Otter tree that give rise to a phylogenetic tree.) Thus,
$\phi_n$ taken with an argument near~$1/2$ serves, up to normalization, as 
the probability generating function of the number of symmetrical nodes in phylogenetic trees
of~$\cal B_n$.

From this point on, the analysis of symmetries in phylogenetic trees is entirely similar to that of
Otter trees. For $u$ in a small complex neighborhood~$\wh\Omega$ of~$1/2$,
the generating function $z\mapsto F(z,u)$ has a dominant singularity~$\rho(u)$ 
that is an analytic solution of~\eqref{ananeq} and is such that $\rho(1/2)=1/2$,
the radius of convergence of $B(z)\equiv F(z,1/2)$. As a consequence, estimates that parallel
those of~\eqref{phinasy} and~\eqref{pgfUn} are seen to hold, but with $u\in\wh\Omega$ now near $1/2$. 
In particular,
\begin{equation}\label{pgfBn}
\frac{\phi_n(u)}{\phi_n(1/2)}=\frac{\wh c(u)}{\wh c(1/2)} \left(\frac{\wh \rho(1/2)}{\wh \rho(u)}\right)^{n}
\left[1+\wh \varepsilon_n(u)\right],
\end{equation}
where $\wh \varepsilon_n(u)\to0$ uniformly.
By the Quasi-powers Theorem (set $u:=v/2$, with~$v$ near~$1$), 
the distribution of~$Y_n$ is asymptotically normal.
\end{proof}

Figure~\ref{gauss-fig} shows that the fit with a Gaussian is quite good, even for 
comparatively low sizes ($n=100$).
Phrased differently, the statement of Theorem~\ref{thm2} means that the \emph{logarithm of
 the order~$2^{\sym(t)}$ of the automorphism group of
a random tree~$t$ (either in~$\cal U_n$ or in~$\cal B_n$) is normally distributed}\footnote{%
	The situation is loosely evocative of the fact (Erd\H os--Tur\'an Theorem)
	that the logarithm of the order of 
	a random permutation of size~$n$ is normally distributed; see, e.g.,~\cite{ErTu67,GoSc91b,Nicolas85}. 
}. 
In the case of~$\cal U_n$, the \emph{expectation}
of the cardinality of this group has been determined by
 McKeon~\cite{McKeon96} to
grow roughly as~$1.33609^n$. 
In the case of phylogenetic trees ($\cal B_n$), we find an \emph{expected} growth of the rough form 
$1.24162^n$, where the exponential rate $1.24162\cdots$ is exactly $1/(2\rho_1)$, with~$\rho_1$,
still, the radius of convergence of $U(z)\equiv F(z,1)$.
(These values are consistent with the fact that trees with a higher number of symmetries 
admit a smaller number of labellings, hence  are less likely 
to appear as ``shapes'', under the phylogenetic model~$\cal B_n$.)

As a matter of fact, the histograms of Figure~\ref{gauss-fig}
suggest that a convergence stronger than a plain convergence in law (corresponding to 
convergence of the distribution function) holds.
\begin{definition}\label{lll-def}
Let~$(\xi_n)$ be a family of random variables with expectation~$\mu_n=\E(\xi_n)$
and variance~$\sigma_n^2=\Var(\xi_n)$. 
It is said to satisfy a local limit law with density~$g(x)$ if one has
\begin{equation}\label{lll0}
\lim_{n\to\infty} \sup_{x\in\R}
\left|\sigma_n\Pr(\xi_n=\lfloor \mu_n+x\sigma_n\rfloor )-g(x)\right|=0.
\end{equation}
\end{definition}

\noindent
In other terms, we expect the probability of~$\xi_n$ being
at~$x$ standard deviations away from its mean to be well approximated by
$g(x)/\sigma_n$. 
This concept is discussed in the case of sums of random variables by Gnedeneko
and Kolmogorov in~\cite[Ch.~9]{GnKo68} and, in a broader combinatorial context,
by Bender~\cite{Bender73} and Flajolet--Sedgewick~\cite[\S IX.9]{FlSe08}.

\begin{theorem}\label{thm3}
The number of symmetrical nodes in either an unlabeled tree 
($X_n$ on~$\cal U_n$) or a phylogenetic tree ($Y_n$ on~$\cal B_n$)
satisfies a \emph{local limit law} of the Gaussian type.
That is, in the sense of Definition~\ref{lll-def},
a local limit law holds, with  density
\[
g(x)=\frac{1}{\sqrt{2\pi}}e^{-x^2/2}.
\]
\end{theorem}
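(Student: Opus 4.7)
The plan is to upgrade the central limit law of Theorem~\ref{thm2} to a local limit law along the standard scheme of Flajolet--Sedgewick~\cite[\S IX.9]{FlSe08}: express the probability mass function via Cauchy's coefficient formula, approximate the integrand on the unit circle by the quasi-powers form from~\eqref{pgfUn}, and extract the Gaussian density by Laplace's method. For the Otter case, with $\phi_n(u)$ as in~\eqref{defphin}, the starting identity is
\[
\Pr(X_n=k)=\frac{1}{\phi_n(1)}\cdot\frac{1}{2\pi}\int_{-\pi}^{\pi}\phi_n(e^{i\theta})\,e^{-ik\theta}\,d\theta,
\]
and I would split the arc at $|\theta|\le\theta_0$ with $\theta_0\to 0$ slowly (say $\theta_0=n^{-2/5}$).

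On the central arc, the perturbation $\rho(u)$ determined by~\eqref{ananeq} is analytic at $u=1$, and the quasi-powers estimate~\eqref{pgfUn} persists at $u=e^{i\theta}$. Substituting $u=1+i\theta-\tfrac12\theta^2+O(\theta^3)$ and expanding $\log(\rho(1)/\rho(u))^n$ produces the saddle-point phase $in\mu\theta-\tfrac12 n\sigma^2\theta^2+O(n|\theta|^3)$, and a Laplace-type evaluation yields, uniformly in~$k$, the contribution $\sigma^{-1}n^{-1/2}g((k-\mu n)/(\sigma\sqrt n))$, matching the Gaussian density of Definition~\ref{lll-def}.

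The hard part will be the \emph{non-central} arc $\theta_0\le|\theta|\le\pi$, where one must show $|\phi_n(e^{i\theta})/\phi_n(1)|=O(\eta^n)$ for some $\eta<1$, uniformly in~$\theta$. For this I would extend the singular analysis of Lemmas~\ref{rho-lem}--\ref{number-lem} to the family of equations~\eqref{ananeq} indexed by $u=e^{i\theta}$ and establish the strict bound $|\rho(e^{i\theta})|>\rho_1$ for every $\theta\ne 0$. This is an aperiodicity condition that I would prove by the same Daffodil-Lemma argument that yielded Lemma~\ref{number-lem}: the equality $|F(z,u)|=F(|z|,1)$ at a point inside the common disc would force every coefficient $[z^n u^k]F(z,u)\cdot u^k$ to be a nonnegative real, and since $\sym$ takes each of the values $0,1,2,\ldots$ on some Otter tree (already visible among trees of small size), this requires $u^k>0$ for all $k\ge 0$, hence $u=1$. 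Compactness of the arc then gives $\inf|\rho(e^{i\theta})|/\rho_1>1$, which a uniform version of the singularity-analysis transfer theorems converts into the required exponential decay of $\phi_n(e^{i\theta})$.

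The phylogenetic case is entirely parallel: set $u=v/2$ with $|v|=1$, use~\eqref{defphi2n} to identify $\phi_n(v/2)/\phi_n(1/2)$ with the probability generating function of~$Y_n$, and rerun the argument around $v=1$ using the perturbation $\wh\rho(v)$ from the proof of Theorem~\ref{thm2}$(ii)$, with $\wh\rho(1)=1/2$. The aperiodicity input for $\sym$ is inherited unchanged. Assembling the central Gaussian integral and the negligible non-central tail then delivers the local limit law in the sense of Definition~\ref{lll-def}.
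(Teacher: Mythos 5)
Your proposal follows essentially the same route as the paper: a quasi-powers approximation on the central arc $u\in\Omega$, an exponentially small bound on the non-central arc obtained from a Daffodil-lemma/aperiodicity argument showing that $z\mapsto F(z,u)$ has no singularity on $|z|=\rho_1$ when $|u|=1$, $u\ne 1$, and then the Cauchy-integral saddle-point scheme that Flajolet--Sedgewick package as Theorem~IX.14 (which the paper invokes directly and sketches in a footnote). The only notable difference is that the paper works directly with bivariate analyticity of $F(z,u)$ in the polydisc $|z|<\rho_1+\eta$, $|u|<1+\eta$ rather than phrasing the non-central bound via a function $\rho(e^{i\theta})$, which sidesteps the well-definedness and continuity of that map; also, your aperiodicity step should constrain the phases of both $z^n$ and $u^k$ (not just $u^k$) since $z$ ranges over the full circle $|z|=\rho_1$ --- the lattice spanned by $(1,0)$, $(2,1)$, $(3,1)$ forces both $\arg z=0$ and $\arg u=0$, which is the form of the argument in the paper.
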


\begin{proof}
$(i)$~\emph{The unlabeled case $(X_n,\cal U_n)$}.
The proof essentially boils down to establishing that
\[
f_n(u)=[z^n] F(z,u)
\]
is small compared to $[z^n]F(z,1)$, as soon as $u$ 
satisfies $|u|=1$ and stays away from~1; then,
Theorem~IX.14, p.~696, from~[FlSe08] does the rest. The arguments are
variations of the ones previously used.

Since a tree of size~$n$ has less than~$n$ symmetrical nodes,
we have $|f_n(u)|\le |u|^n f_n(1)$ for any $|u|\ge1$. There results 
that the convergence of the series expansion of~$F(z,u)$ is dominated by that of $F(|zu|,1)$, whenever $|u|\ge1$. 
Apply the fact explained in the previous sentence,
 with $z^2$ and $u^2$ instead of
$z$ and $u$, to get that the coefficients of 
$F(z^2,u^2)$ are less than the 
coefficients of $F(|z^2u^2|,1$),
where the latter series is convergent if
$|z^2u^2|<0.625$, or in other words, $|zu|<0.75$, say. 
Now choose $\eta$ so that
$(1+\eta) (\rho_1+\eta) < 0.75$,
where~$\rho_1$ is the radius of convergence of Otter trees ($\rho_1\equiv\rho(1)\approx0.40269$). Then 
$F(z^2,u^2)$ is bivariate analytic whenever $|z|<(\rho_1+\eta)$ and $|u|<1+\eta$.
In accordance with previously developed arguments, this implies that, for 
any fixed~$u$ satisfying $|u|\le 1+\eta$, the function
$z\mapsto F(z,u)$ has only finitely many singularities, each of the
square-root type, in $|z|\le \rho_1+\eta$.

For~$u$ in a small complex neighborhood of~1, we already know that $z\mapsto F(z,u)$ has only \emph{one} dominant singularity at some
$\rho(u)$, which is a root of
\[
1-2\rho(u)+(2u-1)F(\rho(u)^2,u^2)=0.
\]
(This  property  lies at the basis  of  the central  limit  law of the
previous theorem.) 

Consider      now a~$u$ such that  $|u|=1$,    but
$u\not\in\Omega$.  We argue that  $z\mapsto F(z,u)$ is analytic at all
points~$z$ such that $|z|=\rho_1$. Indeed for such values of~$u$ and~$z$, 
we have, by the \emph{strong triangle inequality},
\begin{equation}\label{ineq0}
|F(z,u)|<F(\rho_1,1),
\end{equation}
the reason being that, in the expansion $F(z,u)=z+uz^2+uz^3+\cdots$,
the values of the monomials $u^kz^n$ cannot be all collinear, unless~$u=1$.
The inequality~\eqref{ineq0} combined with the fact that $F(\rho_1,1)=1$
implies that $z\mapsto F(z,u)$ cannot be singular (since,
as we know, the only possibility for a singularity would be
that it is of the square-root type \emph{and}~$F(z,u)=1$).

Thus, for $|u|=1$ and~$u\not\in\Omega$, the function $z\mapsto F(z,u)$
is analytic at all points of~$|z|=\rho_1$. Hence, 
it is analytic in $|z|\le\rho_1+\delta$,
for some~$\delta>0$. By usual exponential bounds, there results that, for some~$K>0$,
one has
\begin{equation}\label{ineq1}
\left|f_n(u)\right|< K \left(\rho_1+\delta/2\right)^{-n},
\qquad
|u|=1,\quad u\not\in\Omega.
\end{equation}
As expressed by Theorem~IX.14\footnote{
        The reasoning corresponding to that theorem is simple: start from
\[
[u^k]f_n(u) = \frac{1}{2i\pi} \int_{|u|=1} f_n(u)\, \frac{du}{u^{k+1}}.
\]
Use~\eqref{ineq1} to neglect the contribution
corresponding to~$u\not\in\Omega$; appeal to the saddle point method
applied to the quasi-powers approximation to estimate the central part~$u\in\Omega$, 
and conclude.
} of~\cite{FlSe08}, the existence of a quasi-powers approximation 
(when~$u$ is near 1), as in~\eqref{phinasy} and~\eqref{pgfUn}, and of the exponentially small bound 
(when $u\not\in\Omega$ is away from~1), as provided by~\eqref{ineq1},
suffices to ensure the existence of a local limit law. 

$(ii)$~\emph{The labeled case $(Y_n,\cal B_n)$}.
In accordance with~\eqref{defphi2n},
the function $F(z,u/2)$ is the bivariate exponential generating function of
phylogenetic trees, with~$z$ marking size and~$u$ marking the number of
symmetrical nodes. Consider once more $|u|=1$
and distinguish the two cases $u\in\wh\Omega$ (for which the proof of Theorem~\ref{thm2}
provides a quasi-powers approximation) and $u\not\in\wh\Omega$.
In the latter case, arguments that entirely parallel those applied
to unlabeled trees give us that $z\mapsto F(z,u/2)$ has no singularity on~$|z|=1/2$.
This implies, for $u\not\in\wh\Omega$, the exponential smallness of $\wh\varphi_n(u/2)$, 
as defined in~\eqref{defphi2n},
resulting in an estimate that parallels~\eqref{ineq1}. 
Theorem~IX.14 of~\cite{FlSe08} again enables us to conclude as to the 
existence of a local limit law.
\end{proof}

\section{Coincidence of the Number of Symmetries}\label{same-sec}

From a statistician's point of view, it may be of interest to 
determine the probability for two trees to be ``\emph{similar}'' (rather
 than plainly isomorphic),
given some structural similarity distance between non-plane trees---see, for
 instance,
the work of Ycart and Van Cutsem~\cite{VaYc98} for a study conducted under
 probabilistic assumptions that 
differ from ours.
Combinatorial generating functions can still be useful in this broad range of problems,
as we now show by considering the following question:
\emph{determine the probability that two randomly chosen trees $\tau,\tau'$ 
of the same size have the same 
number of symmetrical nodes}. This probability \emph{a priori} lies in the interval~$[\frac1n,1]$;
we shall see, in Theorem~\ref{thm4}, that its asymptotic value is ``in-between''.

The problem under consideration belongs to an orbit of questions occasionally
touched upon in the literature. For instance, Wilf~\cite{Wilf05} showed 
that the probability that two permutations of size~$n$ have the same number
of cycles is asymptotic to $(2\sqrt{\pi \log n})^{-1}$; B\'ona and Knopfmacher~\cite{BoKn08}
examine combinatorially and asymptotically 
the probability that various types of integer compositions have the same number of parts,
and several other coincidence probabilities are studied in~\cite{FlFuGoPaPo06}.
The following basic lemma  trivializes the asymptotic side of several such questions.

\begin{lemma}\label{coinc-lem}
Let $\cal C$ be a combinatorial class equipped with an integer-valued para\-meter~$\chi$.
Assume that the random variable corresponding to~$\chi$ restricted to~$\cal C_n$ 
(under the uniform distribution over~$\cal C_n$) satisfies a local limit law with density~$g(x)$,
in the sense of Definition~\ref{lll-def}. 
Let the variance of~$\chi$ on~$\cal C_n$ be~$\sigma_n^2$ and assume that $g(x)$ is 
continuously differentiable.
Then, the probability that two objects $c,c'\in\cal C_n$ 
admit the same value of~$\chi$ satisfies the asymptotic estimate
\begin{equation}\label{coinc0}
\Pr\bigg[\chi(c)=\chi(c'),\quad c,c'\in\cal C_n\bigg]
\sim 
\frac{K}{\sigma_n},
\qquad\hbox{where}\quad 
K:=\int_{-\infty}^{\infty} g(x)^2\, dx.
\end{equation}
\end{lemma}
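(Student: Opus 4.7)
The plan is to exploit the local limit law in order to replace the discrete point probabilities by values of the density~$g$, then rewrite the resulting sum as an expectation, and finally identify its limit via the integral (central) limit consequence of the local limit law. Write $p_n(k):=\Pr[\chi_n=k]$ for $\chi_n$ the restriction of~$\chi$ to~$\cal C_n$ under the uniform distribution. Two independent uniform draws $c,c'$ from~$\cal C_n$ agree on~$\chi$ with probability $\sum_k p_n(k)^2$, so this is the quantity to estimate. By Definition~\ref{lll-def}, I may write
\[
p_n(k)=\frac{1}{\sigma_n}\bigl(g(x_k)+r_n(k)\bigr),\qquad x_k:=\frac{k-\mu_n}{\sigma_n},
\]
where $\epsilon_n:=\sup_{k\in\Z}|r_n(k)|$ tends to~$0$.

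The key trick is to substitute this expression into \emph{just one} of the two factors of~$p_n(k)^2$, leaving the other alone:
\[
\sum_k p_n(k)^2=\frac{1}{\sigma_n}\sum_k p_n(k)\,g(x_k)\;+\;\frac{1}{\sigma_n}\sum_k p_n(k)\,r_n(k).
\]
The second piece is at most $\epsilon_n/\sigma_n$ in absolute value since $\sum_k p_n(k)=1$, hence contributes $o(1/\sigma_n)$. The first piece is exactly $\E[g(X_n^{\ast})]/\sigma_n$, with $X_n^{\ast}:=(\chi_n-\mu_n)/\sigma_n$. Since the local limit law implies the corresponding central limit law, $X_n^{\ast}$ converges in distribution to a variable~$Z$ whose density is~$g$; then, using $g$ as a bounded continuous test function (which is the case for the Gaussian density appearing in Theorem~\ref{thm3}), the portmanteau theorem yields
\[
\E[g(X_n^{\ast})]\;\longrightarrow\;\E[g(Z)]=\int_{-\infty}^{\infty}g(x)^2\,dx=K,
\]
and hence $\sum_k p_n(k)^2=K/\sigma_n+o(1/\sigma_n)$, which is precisely~\eqref{coinc0}.

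The main obstacle is the final passage $\E[g(X_n^{\ast})]\to K$, which must not allow the expectation to leak into the tails where the local limit law gives no pointwise grip. A robust backup, which moreover explains the role of the continuous differentiability hypothesis, is to truncate the outer sum to $|k-\mu_n|\le L\sigma_n$ and handle the two pieces separately: on the truncated range, $(g(x_k)+r_n(k))^2/\sigma_n^2$ is a Riemann sum of mesh~$1/\sigma_n$ for the smooth, bounded integrand~$g(x)^2$, converging to $K/\sigma_n$ with an error controlled by $\|g'\|_\infty$ on $[-L,L]$; for the complementary range, Chebyshev's inequality gives $\Pr[|X_n^{\ast}|>L]=O(1/L^2)$, so the tail contribution to $\sum p_n(k)^2$ is $O(1/(L^2\sigma_n))$ (by the same one-factor trick), negligible once~$L=L(n)\to\infty$ is chosen slowly enough to preserve uniform control of the~$r_n(k)$.
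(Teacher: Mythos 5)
Your proof is correct and, in its main thread, takes a genuinely different route from the paper's. The paper's sketch replaces \emph{both} factors in $p_n(k)^2$ by $g(x_k)/\sigma_n$, obtaining $\sigma_n^{-2}\sum_k g(x_k)^2$, which it recognizes as a mesh-$1/\sigma_n$ Riemann sum for $\sigma_n^{-1}\int g^2$; the error analysis then consists of truncating to a finite window, invoking the uniformity in~\eqref{lll0} on that window, and completing the tails. Your ``one-factor trick'' replaces only one copy of $p_n(k)$, so that the remainder is $\sigma_n^{-1}\sum_k p_n(k)r_n(k)$, which is killed outright by $\sum_k p_n(k)=1$ with no truncation at all---a genuine economy in the error bookkeeping. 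The price you pay is that the main term becomes $\sigma_n^{-1}\,\E[g(X_n^\ast)]$, and you must separately argue that $\E[g(X_n^\ast)]\to\int g^2$. You do this by asserting that the local limit law implies the central limit law and then invoking the portmanteau theorem, which requires $g$ bounded and continuous. Two small caveats are worth flagging: (a) boundedness of $g$ is not literally among the stated hypotheses (``continuously differentiable density'' does not by itself imply boundedness), though it holds in the Gaussian application and can in fact be extracted from~\eqref{lll0}; and (b) the implication ``local limit $\Rightarrow$ weak convergence'' deserves a sentence of proof---it follows by summing~\eqref{lll0} over $k$ in a window of width $O(\sigma_n)$ and using $\int g=1$ for tightness. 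Your backup paragraph (truncation, Riemann sum with an error controlled by $\|g'\|_\infty$, Chebyshev for the tails) is essentially the paper's own argument and also correctly explains where the $C^1$ hypothesis is used, so together the two give a complete and somewhat more detailed justification than the paper's sketch.
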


\noindent
Note that, for $g(x)$ the standard Gaussian density, one has $K=1/(2\sqrt{\pi})$.

\begin{proof}[Proof (sketch)]
Let~$\varpi_n$ be the probability of coincidence; that is, the left hand-side of~\eqref{coinc0}.
Observe that, by hypothesis, we must have $\sigma_n\to\infty$.
The baseline  is that
\[\renewcommand{\arraystretch}{1.8}
\begin{array}{lllll}
\varpi_n & =& \ds \sum_k \Pr_{\cal C_n}[\chi(c)=k]^2 \\
&\sim & \ds \frac{1}{\sigma_n^2}\sum _{x\in \cal E_n} g(x)^2, && \ds 
\hbox{with}\quad \cal E_n:=\frac{1}{\sigma_n} \left(\cal Z_{\ge0}-\{\mu_n\}\right),
\quad \mu_n:=\E_{\cal C_n}[\chi]\\
&\sim&\ds  \frac {1}{\sigma_n}\int_{-\infty}^{\infty} g(x)^2\, dx.
\end{array}
\]
To justify this chain rigorously, first restrict attention to values of~$x$ in a finite interval 
$[-A,+B]$, so that the tails $(\int_{<A}+\int_{>B}) g$ are less than some
small~$\epsilon$.
Then, with $x\in[-A,+B]$, make use of the approximation~\eqref{lll0} provided by
the assumption of a local limit law. Next, approximate the sum of $g(x)^2$ taken at 
regularly spaced  sampling points (a Riemann sum) by the corresponding integral. Finally, complete back the tails.
\end{proof}

Given the local limit law expressed by Theorem~\ref{thm3}, an immediate consequence
of Lemma~\ref{coinc-lem} is the following.

\begin{theorem}\label{thm4}
For Otter trees $(\cal U_n)$ and phylogenetic trees $(\cal B_n)$,
the asymptotic  probabilities that two trees of size~$n$ have the same number of symmetries
admit the forms
\[
{\cal U}_n~: \quad \frac{1}{2\sigma \sqrt{\pi n}},
\qquad
{\cal B}_n~: \quad  \frac{1}{2\wh \sigma \sqrt{\pi n}},
\]
where $\sigma,\wh\sigma$ are the two ``variance constants'' of Theorem~\ref{thm2}.
\end{theorem}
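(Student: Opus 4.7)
The plan is to derive Theorem~\ref{thm4} as a direct corollary of Lemma~\ref{coinc-lem}, combined with the local limit laws of Theorem~\ref{thm3} and the variance information implicit in the quasi-powers approximations of Theorem~\ref{thm2}. Concretely, I would take $\cal C$ to be $\cal U$ (respectively $\cal B$), $\chi$ the parameter $\sym(\cdot)$, and invoke Lemma~\ref{coinc-lem} to rewrite the coincidence probability as $K/\sigma_n$ plus lower-order terms, with $K=\int_{-\infty}^{\infty} g(x)^2\,dx$ for the limiting density~$g$.

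First I would verify the hypotheses of Lemma~\ref{coinc-lem}. Theorem~\ref{thm3} gives a local limit law of Gaussian type for both $X_n$ on~$\cal U_n$ and $Y_n$ on~$\cal B_n$, with limit density $g(x)=\frac{1}{\sqrt{2\pi}}e^{-x^2/2}$, which is certainly continuously differentiable, so the analytic assumption of the lemma is satisfied. Next I would extract the variance scaling from the quasi-powers approximations~\eqref{pgfUn} and~\eqref{pgfBn} established in the proof of Theorem~\ref{thm2}: the Quasi-powers Theorem (see~\cite[\S IX.5]{FlSe08}) asserts that, under such approximations, the mean and variance of~$X_n$ (respectively $Y_n$) grow linearly in~$n$, with $\Var(X_n)\sim\sigma^2 n$ and $\Var(Y_n)\sim\widehat\sigma^{\,2}n$, where $\sigma,\widehat\sigma$ are the constants appearing in the central limit statement of Theorem~\ref{thm2}. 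Thus $\sigma_n=\sigma\sqrt{n}\,(1+o(1))$ and analogously for the phylogenetic case.

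Then I would compute the constant~$K$ for the standard Gaussian density,
\[
K=\int_{-\infty}^{\infty} \frac{1}{2\pi}\,e^{-x^2}\,dx=\frac{1}{2\sqrt{\pi}},
\]
as already noted just after the statement of Lemma~\ref{coinc-lem}. Plugging $K=\frac{1}{2\sqrt{\pi}}$ and $\sigma_n\sim\sigma\sqrt{n}$ into~\eqref{coinc0} yields
\[
\Pr\bigl[\sym(t)=\sym(t'),\ t,t'\in\cal U_n\bigr]\sim \frac{1}{2\sqrt{\pi}}\cdot\frac{1}{\sigma\sqrt{n}}=\frac{1}{2\sigma\sqrt{\pi n}},
\]
and similarly $\frac{1}{2\widehat\sigma\sqrt{\pi n}}$ for $\cal B_n$, which is exactly the claim.

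I do not anticipate a genuinely hard step, since all of the analytic machinery has been set up by the previous theorems; the only mildly delicate point is the extraction of the variance asymptotics $\sigma_n^2\sim\sigma^2 n$ from the quasi-powers framework of the proof of Theorem~\ref{thm2}, but this is a standard consequence of the Quasi-powers Theorem, obtained by differentiating the approximation~\eqref{pgfUn} (respectively~\eqref{pgfBn}) twice at $u=1$ (respectively $u=1/2$) and reading off the asymptotic variance from the analytic function $\log(\rho(1)/\rho(u))$. Once this is in hand, the result follows immediately by substitution into Lemma~\ref{coinc-lem}.
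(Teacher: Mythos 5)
Your proposal is correct and follows exactly the same route the paper takes: the paper presents Theorem~\ref{thm4} as an immediate consequence of Lemma~\ref{coinc-lem} combined with the local limit law of Theorem~\ref{thm3} and the Gaussian value $K=1/(2\sqrt{\pi})$, and identifies $\sigma,\wh\sigma$ as the variance constants arising from the quasi-powers framework of Theorem~\ref{thm2}. You spell out the hypothesis checks and the variance extraction in slightly more detail than the paper bothers to, but there is no difference of substance.
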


In summary, as we see in 
several particular cases here, \emph{qualitatively} similar phenomena are expected in trees,
whether plane or non-plane trees,  labelled or unlabelled, whereas, \emph{quantitatively},
the structure constants (for instance, $\mu$ and~$\wh \mu$ in Theorem~\ref{thm2};
$\sigma$ and $\wh\sigma$ in Theorem~\ref{thm4})
tend to be model-specific. Yet another instance of 
such universality  phenomena is the height of Otter trees, analysed in~\cite{BrFl08},
which is to be compared to the height of plane binary trees~\cite{FlOd82}:
both scale to $\sqrt{n}$ and lead to the same elliptic-theta distribution, albeit with different scaling factors.

\smallskip
\begin{small}%
\noindent
{\bf Acknowledgements.}
The work of M.~B\'ona
was partially supported by the National Science Foundation and the
National Security Agency.
The work of P. Flajolet was partly supported by the French ANR Project SADA (``Structures Discr\`etes
et Algorithmes'').\par
\end{small}

\def\cprime{$'$}


\end{document}